\newtheorem{theorem}{Theorem}
\newtheorem{lemma}{Lemma}
\newtheorem{corollary}{Corollary}
\newtheorem{proposition}{Proposition}
\newcommand{\pr}{\mathrm{P}}
\newcommand{\E}{\mathrm{E}}
\title{Concentration inequalities of MLE and robust MLE}
\author{Xiaowei Yang$^{1}$~~Xinqiao Liu$^2$~~Haoyu Wei$^{3}$\thanks{Corresponding authors: Haoyu Wei (h8wei@ucsd.edu). The Yang X. (yxw@stu.scu.edu.cn) and Liu X. (xinqiao.liu@tju.edu.cn) are co-first authors.}}
\date{\small 1. College of Mathematics, Sichuan University, Chengdu, China;\\
2. School of Education, Tianjin University, Tianjin, China;\\
3. Department of Economics, University of California San Diego, La Jolla, USA.}
\begin{document}

\maketitle

\begin{abstract}
The Maximum Likelihood Estimator (MLE) serves an important role in statistics and machine learning. In this article, for i.i.d. variables, we obtain constant-specified and sharp concentration inequalities and oracle inequalities for the MLE only under exponential moment conditions. Furthermore, in a robust setting, the sub-Gaussian type oracle inequalities of the log-truncated maximum likelihood estimator are derived under the second-moment condition.\\

Keywords: maximum likelihood estimation; robust estimation; concentration.

\end{abstract}

\section{Introduction}


%

Let $\left\{X_{i}\right\}_{i=1}^n$ be i.i.d. variables in a metric space $(\mathcal{X}, d)$ with the population distribution $X \sim \mathbb{P}_{\theta^*},(\theta^* \in \mathbb{R})$. Given a density $p({x},\theta )$ and $l(x,\theta ): = -\log p({x},\theta )$, the maximum likelihood estimator (MLE) is a function of the data
\begin{equation}\label{eq:MLE}
    \widehat{\theta} : = \mathop {\arg\min }\limits_{\theta  \in {\Theta}} \left[\frac{1}{n}\sum\limits_{i = 1}^n {l(X_{i}, \theta )} \right]=f(X_{1},\cdots,X_{n}).
\end{equation}
The primitive and classical results of MLE are in an asymptotic view \citep{cramer1946mathematical}, i.e. obtaining the limit law of $\widehat{\theta}$ and then doing inference for the true parameter ${\theta ^*}  =  {\arg \min }_{\theta  \in \Theta } {\rm{E}}[l(X,\theta )]$ based on the asymptotic confidence intervals. Lately, the asymptotic normality of MLE is presented and taught in most textbooks of mathematical statistics; see \cite{shao2010mathematical} and references therein. Recently, machine learners care more about finite sample theories for $\widehat{\theta}$ derivation from its true value ${\theta ^*}$.
Given the metric $d$, it is of interest to find non-asymptotical and constant-shaper of the error bound $|\widehat{\theta}-{\theta ^*}|$ for all $n$ with the $1/{\sqrt n}$ convergence rate of $\widehat{\theta}$. \cite{Miao2020} only obtained the sub-Gaussian concentration of $|\widehat{\theta}-{\rm{E}}\widehat{\theta}|$ under some stronger conditions. But he did not consider ${\rm{E}}\widehat{\theta}-{\theta ^*}$. Note that $\E \widehat{\theta}$ may not always equal to $\theta^*$, it is better to consider $\widehat{\theta} - {\theta ^*}=(\widehat{\theta}-{\rm{E}}\widehat{\theta})+({\rm{E}}\widehat{\theta}-{\theta ^*})$ instead of $\widehat{\theta}-{\rm{E}}\widehat{\theta}$. Moreover, rather than using the sub-Gaussian data, we will just 
require the data are the sub-exponential or with finite moment conditions, which is larger than sub-Gaussian families and only requires the existence of moment generating function (MGF) at a neighbourhood of zero. What we will do, to fill the existent gap, is dealing with sharp and constant determined concentration inequalities for MLE without or with truncation for $n<\infty$.

In the last 50 years, classical robust statistics is a traditional topic that has been well studied, starting from  Huber's pioneering work in the asymptotical view; see \cite{huber1964robust}. In \cite{lerasle2019lecture}, an estimator is called robust if it behaves nicely when the data may be corrupted with measurement error. These days, robust learning is revitalized and invigorating, and the new estimators have sub-Gaussian behavior in a relaxed setting where the Gaussian assumption (exponential moment) is replaced by some finite moment hypotheses; see \cite{lerasle2019lecture,sun2020adaptive} and references therein. Robust M-estimators for the mean of corrupted observation data, are extensively popular in recent robust statistical learning; see \cite{Catoni2012,sun2021we,zhang2022sharper,yao2022asy}. The machine learners aim to propose robust estimators that enjoy finite sample theory. The sub-Gaussian data assumption is not an impressive extension from bounded data to unbounded data for this paper's purpose, since all $k$th-moments of sub-Gaussian data exist. There are data distributions without higher moments in finance statistics and extreme statistics. For example, the Italian economist Vilfredo Pareto studied the distribution of wealth in England in 1882, that the top 20$\%$ of the population owned 80$\%$ of society's wealth, and this phenomenon can be described by a Pareto distribution with only some finite moments. The Pareto distribution is a skewed and fat-tailed distribution that is widely used in economic research; see \cite{arnold2014pareto,Weinberg2017}.

The existing literature is only for sub-Gaussian conditions, and the constants of concentration inequalities are unknown.
We improve it to two exponential conditions in Theorem \ref{thm:MLE}. We also propose a log-truncated MLE, which does not require exponential moment conditions, but only second moment condition implies sub-Gaussian concentration in Theorem \ref{eq:cantoniG}. The specific contributions:
\begin{itemize}

\item Through a different proof, our first result contributes a constant-specified concentration inequality for MLE with sub-Gaussian condition of data, while the constant in Theorem 3.3 of \cite{Miao2020} is unknown. The proof is by a sharp concentration inequality (Corollary \ref{cor:fxcon}).

\item By Corollary \ref{lem:maurer2021some}, our second contribution improves \cite{Miao2020} if the data have finite sub-exponential moment.  \cite{Miao2020} did not give the concentration of MLE in the sub-exponential case, and he also did not consider the bias bound of the MLE and error bound of $|\widehat{\theta}-{\theta ^*}|$.

\item In robust setting, to avoid making the exponential moment conditions, we choose the log-truncated MLE to derive the sub-Gaussian type high probability error bounds for the robust estimator under a broad class of log-likelihood function.


\end{itemize}

\textbf{Notations}. For $\theta \in \mathbb{R}^p$, ${\| \theta  \|_{\ell_q}}: ={( {\sum_{j = 1}^p {\theta_j^q} } )^{1/q}}$. A centered random variable (r.v.) $X$ is called sub-Gaussian if ${\rm{E}}{e^{s X}} \le {e^{{s^2}{{\sigma}^2}/2}}$ for $\forall~s \in \mathbb{R}$, where the quantity ${\sigma}>0$ is named as the sub-Gaussian parameter. And we denote it as $X \sim \operatorname{subG}({\sigma^2})$. Similarly, a r.v. $X$ with $\E X = 0$ is sub-exponential with two positive parameters $(\lambda, \alpha)$, if its MGF satisfies ${\mathrm{{E}}}e^{s X} \leq e^{\frac{s^{2}\lambda ^{2}}{2}}~\text { for all }|s|<{1}/{\alpha}$. We denote as $X \sim \operatorname{subE}(\lambda, \alpha)$. Define the $L_p$-norm of r.v. $X$ as ${\left\| X \right\|_p} = {({\rm{E}}|X{|^p})^{1/p}}$. And sub-Gaussian and sub-exponential norms $\|\cdot\|_{\theta_{1}}$ and $\|\cdot\|_{\theta_{2}} $ for zero-mean r.v. $X$ are defined as
$\|X\|_{\theta_{1}}=\sup _{p \geq 1} \frac{\|X\|_{p}}{{(p!)}^{1/p}}$ and $\|X\|_{\theta_{2}}=\sup _{p \geq 1} [\frac{{\rm{E}}X^{2p}}{(2p-1)!!}]^{1/{(2p)}}$ (pages 6 and 23 in \cite{buldygin2000metric}). Assume that the negative log-likelihood function $l(x,\theta)$ is third continuously differentiable in $\theta$ for $\forall x \in \mathcal{X}$. For brevity, we use $\dot{l}(x,\theta)$, $\ddot{l}(x,\theta)$, and $\dddot{l}(x,\theta)$ to denote the first, second, and third partial derivatives of $l(x,\theta)$ with respect to (w.r.t.) $\theta$  respectively. Fisher information is defined as $I(\theta) = {\rm{E}}\ddot{l}(X,\theta)$.

\section{From MLE to Log-truncated MLE}

\subsection{Non-asymptotic results of MLE}

\cite{Miao2020} adopted the classical logarithmic Soblev inequalities method to bound $|\widehat{\theta}-{\rm{E}}\widehat{\theta}|$ by checking the bounded difference condition of $\widehat{\theta} :=f(X_{1},\cdots,X_{n})$ from the following assumptions of loss function with Lipschitz constant ${c_l}>0$ w.r.t the metric $d(\cdot,\cdot)$ on $\mathcal{X}$:
 \begin{equation}\label{eq:condition}
    \text{(C.1).}~\mathop {\inf }\limits_{\theta  \in \Theta } \ddot{l}(x,\theta) \geq {c_H}>0~;\text{(C.2).}~|\dot{l}(x,\theta)-\dot{l}(y,\theta)| \le {c_l}d(x,y),~\forall \theta \in \Theta,~\forall x,y \in \mathcal{X}.
\end{equation}
The following inequality \eqref{eq5} is given in Lemma 3.2 of \cite{Miao2020}
\begin{align}\label{eq5}
    | \widetilde \theta - \widehat \theta | \leq  \frac{c_{l}d(X_{k},Y_{k})}{n c_{H}},
\end{align}
where $\widetilde{\theta}:=f(X_{1},\cdots,X_{k-1},Y_k,X_{k+1},\cdots,X_{n})$ is the MLE of $\theta^{*}$ in the case of data with perturbations of the $k$-th data point $Y_k$.
\eqref{eq5} satisfies the bounded difference conditions if $d(x,y)$ is bounded implies that the data is bounded, and McDiarmid's inequality is applied to get the sub-Gaussian concentration of $\widehat{\theta} =f(X_{1},\cdots,X_{n})$. However, exponential, Pareto and Gaussian distributions have unbounded support, and ${d({X_k},{Y_k})}$ is unbounded. Applying a new and shaper McDiarmid type inequalities with unbounded difference conditions for summations of independent sub-Gaussian or sub-exponential r.v., we show below that the MLE achieves the rate $1/{\sqrt n}$ constant-specified error bound of $|\widehat{\theta}-{\rm{E}}\widehat{\theta}|$ under mild conditions if data have sub-Gaussian or sub-exponential tails.

Let $Z=\left(Z_{1}, \ldots, Z_{n}\right)$ be a vector of independent r.v.s in a space $\mathcal{Z},$ and define $Z^{\prime}$ as an independent copy of $Z$. For any function $f: \mathcal{Z}^{n} \rightarrow \mathbb{R}$, it is of interest to study the concentration for $f(Z)$ about its expectation. For $w \in \mathcal{Z}$ and $k \in\{1, \ldots, n\}$, define the substitution operator $S_{w}^{k}: \mathcal{Z}^{n} \rightarrow \mathcal{Z}^{n}$ by
$S_{w}^{k} z := \left(z_{1}, \ldots, z_{k-1}, w, z_{k+1}, \ldots, z_{n}\right)$ and the centered conditional version of $f$ 
\begin{align}\label{eq:identy}
    D_{f, Z_k}(z) & :=  f\left(z_{1}, \ldots, z_{k-1}, Z_{k}, z_{k+1}, \ldots, z_{n}\right)-{\rm{E}} f\left(z_{1}, \ldots, z_{k-1}, Z_{k}^{\prime}, z_{k+1}, \ldots, z_{n}\right) \nonumber\\
    &=f\left(S_{Z_{k}}^{k} z\right)-{\rm{E}}f(S_{Z_{k}^{\prime}}^{k} z) = {\rm{E}}[ f\left(S_{Z_{k}}^{k} z\right)-f(S_{Z_{k}^{\prime}}^{k} z) \mid Z_k ].
\end{align}
The $D_{f, Z_{k}}(z)$ can be viewed as random-variable-valued functions $z \in \mathcal{X}^{n} \mapsto D_{f, Z_{k}}(z)$. \cite{maurer2021some} showed the following tail inequalities, which are applicable to derive concentration inequality for suprema of unbounded empirical processes.

\begin{lemma}[Theorems 3 and 4 in \cite{maurer2021some}]\label{lem1} Let $t>0$. {\rm{(a)}}. If $\{D_{f, X_k}(x)\}_{k=1}^n$ have finite $\|\cdot\|_{\psi_{2}}$-norm,
$\operatorname{P}\left\{f(X)-\mathrm{E}f(X)>t\right\} \leq e^{\frac{-t^{2}}{32e\sup_{x \in \mathcal{X}^{n}}\sum_{k}\| D_{f, X_k}(x)\|_{\psi_{2}}^{2}}}$; {\rm{(b)}}. If $\{D_{f, X_k}(x)\}_{k=1}^n$ have finite $\|\cdot\|_{\psi_{1}}$-norm,
$\operatorname{P}\left\{f(X)-\mathrm{E}f\left(X\right)>t\right\} \leq e^{\frac{-t^{2}}{4 e^{2}\sup_{x \in \mathcal{X}^{n}}\sum_k\| D_{f, X_k}(x)\|_{\psi_{1}}^{2}+2 e \max_{1 \le k \le n}\sup_{x \in \mathcal{X}^{n}}\| D_{f, X_k}(x)\|_{\psi_{1}} t}}$.
\end{lemma}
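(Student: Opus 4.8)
The plan is to derive both tails by the Doob-martingale method, reducing the deviation of $f(X)$ to a sum of conditionally centered increments whose Orlicz norms are controlled coordinatewise by $\sup_{x}\|D_{f,X_k}(x)\|_{\psi}$, and then to run a Chernoff argument. First I would fix the filtration $\mathcal{F}_k=\sigma(X_1,\dots,X_k)$ and set $f_k=\E[f(X)\mid\mathcal{F}_k]$, so that $f(X)-\E f(X)=\sum_{k=1}^n(f_k-f_{k-1})$ is a martingale-difference sum with $f_0=\E f(X)$ and $f_n=f(X)$. The pivotal structural identity is that, with $X_1,\dots,X_{k-1}$ frozen, the increment $f_k-f_{k-1}$ is a centered function $g(X_k)$ of $X_k$ alone, obtained by averaging $D_{f,X_k}$, with its past coordinates frozen at $X_1,\dots,X_{k-1}$, over an independent copy of its future coordinates $X_{k+1}',\dots,X_n'$. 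This follows from the tower property together with the defining centering $\E_{X_k}D_{f,X_k}(x)=0$, which holds because $X_k$ and $X_k'$ are identically distributed.

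Granting this identity, the norm control is immediate: since $\|\cdot\|_{\psi_1}$ and $\|\cdot\|_{\psi_2}$ are genuine norms attached to convex Young functions, the triangle inequality for averages gives $\|g\|_{\psi}\le\sup_{x}\|D_{f,X_k}(x)\|_{\psi}$, the norm on the left being taken in the $X_k$ variable. Because $X_k$ is independent of $\mathcal{F}_{k-1}$, the conditional moment generating function $\E[e^{\lambda(f_k-f_{k-1})}\mid\mathcal{F}_{k-1}]$ coincides with the ordinary MGF of the centered variable $g(X_k)$. I would then convert the Orlicz bound into an exponential-moment bound: in the sub-Gaussian regime a finite $\psi_2$-norm yields $\E e^{\lambda g}\le e^{c\lambda^2\sup_x\|D_{f,X_k}(x)\|_{\psi_2}^2}$ for every $\lambda$, while in the sub-exponential regime a finite $\psi_1$-norm yields the same Gaussian-type bound only for $|\lambda|$ below a threshold of order $1/\sup_x\|D_{f,X_k}(x)\|_{\psi_1}$, past which a Bernstein correction is unavoidable.

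With the per-coordinate conditional MGF bounds in hand, I would iterate the tower property from $k=n$ down to $k=1$, replacing each inner conditional expectation by its deterministic envelope and thereby multiplying the bounds. This gives $\E e^{\lambda(f(X)-\E f(X))}\le\exp(c\lambda^2 V)$ with $V=\sum_k\sup_x\|D_{f,X_k}(x)\|_{\psi_2}^2$ in the sub-Gaussian case, and a Bernstein-type log-MGF valid on a bounded $\lambda$-interval in the sub-exponential case. A Chernoff bound $\pr\{f(X)-\E f(X)>t\}\le\inf_{\lambda>0}e^{-\lambda t}\,\E e^{\lambda(f-\E f)}$ followed by optimization in $\lambda$ then produces the Gaussian tail of part (a); in part (b) the optimal $\lambda$ must be clipped to the admissible interval, which is exactly the mechanism that generates the two-regime denominator $4e^2\sum_k\sup_x\|D_{f,X_k}(x)\|_{\psi_1}^2+2e\,(\max_k\sup_x\|D_{f,X_k}(x)\|_{\psi_1})\,t$.

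The architecture above is routine; the real work, and the place I expect to spend the most effort, is pinning down the explicit constants $32e$, $4e^2$, and $2e$. These are fixed entirely by two conversions that must be carried out with the sharpest available comparison inequalities: the passage from a finite tail-type $\psi_2$- (resp. $\psi_1$-) Orlicz norm to the constant $c$ in the exponential-moment bound, which is where the factor $e$ and the sub-exponential threshold enter, and the Chernoff optimization (with clipping in the Bernstein regime), which contributes the remaining numerical factor and the linear-in-$t$ term. Any slack in these comparisons inflates the constants and breaks the stated form, so the bookkeeping, rather than the method, is the genuine obstacle.
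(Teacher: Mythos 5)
Your structural identity is correct: with the past frozen, $f_k-f_{k-1}=\E[f\mid\mathcal{F}_k]-\E[f\mid\mathcal{F}_{k-1}]$ is indeed the average of $D_{f,X_k}$ over the future coordinates, it is conditionally centered, and Jensen's inequality for the Orlicz norm controls each increment by $\sup_x\|D_{f,X_k}(x)\|_{\psi}$. But this is exactly where the approach falls short of the statement. Because the tower-property iteration forces you to replace each conditional MGF bound by its deterministic envelope (a supremum over the already-revealed coordinates) before peeling off the next one, the sups taken at different steps $k$ decouple, and the variance proxy you end up with is $\sum_k\sup_x\|D_{f,X_k}(x)\|_{\psi_2}^2$ --- you even write $V=\sum_k\sup_x\|D_{f,X_k}(x)\|_{\psi_2}^2$ and the analogous sum-of-sups in your part (b) denominator --- whereas the lemma asserts the strictly smaller quantity $\sup_x\sum_k\|D_{f,X_k}(x)\|_{\psi_2}^2$. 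Since $\sup_x\sum_k\le\sum_k\sup_x$, with strict inequality whenever different coordinates attain their worst case at different configurations $x$, the Doob-martingale route proves only a weaker inequality than the one stated. This sup/sum interchange is a known structural limitation of the martingale method, not a matter of bookkeeping, so it cannot be repaired by sharpening the comparison inequalities.

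The route behind the lemma (and the one the paper itself runs when proving Corollary \ref{lem:maurer2021some}) is the entropy method, which is built precisely to avoid this decoupling: the log-MGF is represented as an integral of entropies, $\log\E e^{t(f(Z)-\E f(Z))}=t\int_0^t S(\gamma f(Z))\gamma^{-2}\,d\gamma$, the subadditivity of entropy bounds $S(f(Z))$ by $\E_{f(Z)}[\sum_k S(D_{f,Z_k}(Z))]$ with \emph{all} summands evaluated at the same random configuration $Z$, and only after the sum is formed is a single supremum over $z$ taken; that is what produces the $\sup_x\sum_k$ form. A secondary gap: the constants $32e$, $4e^2$, $2e$ are artifacts of the entropy-method conversions (entropy bounded through the Orlicz norms, then integrated), while your plan defers all constant bookkeeping; the martingale route's own conversions ($\psi$-norm to MGF, then Chernoff with clipping) have no reason to land on these particular values, so even the weaker sum-of-sups statement you would obtain carries different constants than those asserted.
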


For example, if $f(x)=\sum_{i=1}^{n} x_{i}$, then $D_{f, X_k}(x)= X_{k}-\mathrm{E}X_{k}$ is independent of $x.$  We have $\sup_{x \in \mathcal{X}^{n}}\sum_{k=1}^{n}\| D_{f, X_k}(x)\|_{\psi_{2}}^{2}=\sum_{k=1}^{n}\|  X_{k}-\mathrm{E}\left[X_{k}\right]\|_{\psi_{2}}^{2}$. As a generalization of McDiarmid's inequality, Lemma \ref{lem1} does not require bounded assumption on r.v.s and $f(X)$ may be dependent sum of r.v.s.

Recently, Corollary 4 in \cite{lei2020non} obtained a constant-sharper sub-Gaussian concentration for $f(Z)$ by using the $\| \cdot \|_{{\theta_2}}$-norm of $\{D_{f,  Z_i}(z)\}_{i=1}^n$. See the following proposition.

\begin{proposition}\label{cor:fxcon}
If $\{D_{f,  Z_i}(z)\}_{i=1}^n$ have finite $\| \cdot \|_{{\theta_2}}$-norm for ${z \in \mathcal{Z}}$, we have $f(Z)- {\rm{E}}f(Z) \sim \operatorname{subG}({\rm{8}}\sup_{z \in \mathcal{Z}} \sum_{i = 1}^n {\left\| {{D_{ f,{Z_i}}}(z)} \right\|_{{\theta _2}}^2} )$ and
${\mathop{\rm P}\nolimits} \left\{ {f(Z) - {\rm{E}}f(Z) > t} \right\} \le e^{{{{ - {t^2}}}/{({{\rm{16}}\sup_{z \in \mathcal{Z}} \sum_{i = 1}^n {\| {{D_{f,{Z_i}}}(z)} \|_{{\theta _2}}^2} })}}}$, $t \ge 0$.
\end{proposition}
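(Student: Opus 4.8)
The plan is to establish the sub-Gaussian bound on the moment generating function (MGF) $\mathrm{E}\,e^{s(f(Z)-\mathrm{E}f(Z))}\le e^{s^2 V/2}$ for all $s\in\mathbb{R}$, with parameter $V := 8\sup_{z\in\mathcal{Z}}\sum_{i=1}^n\|D_{f,Z_i}(z)\|_{\theta_2}^2$. The claimed tail inequality is then immediate from the Chernoff bound $\mathrm{P}\{f(Z)-\mathrm{E}f(Z)>t\}\le \inf_{s>0}e^{-st}\,\mathrm{E}e^{s(f-\mathrm{E}f)}\le e^{-t^2/(2V)}=e^{-t^2/(16\sup_z\sum_i\|D_{f,Z_i}(z)\|_{\theta_2}^2)}$, while the stated $\operatorname{subG}(V)$ membership is read off directly from the MGF estimate. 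Thus everything reduces to the MGF bound, which I would obtain by a coordinate-wise tensorization combined with a conversion from the $\theta_2$-norm to an MGF estimate.

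The key auxiliary step is the deterministic fact that any centered scalar r.v.\ $D$ with $\|D\|_{\theta_2}=\sigma<\infty$ is sub-Gaussian. The definition of the $\theta_2$-norm gives the even-moment control $\mathrm{E}D^{2p}\le (2p-1)!!\,\sigma^{2p}$ for every $p\ge1$. When $D$ is symmetric all odd moments vanish and, using $(2p-1)!!/(2p)!=1/(2^p p!)$, the MGF series resums: $\mathrm{E}e^{sD}=\sum_{p\ge0}\frac{s^{2p}\mathrm{E}D^{2p}}{(2p)!}\le\sum_{p\ge0}\frac{(s^2\sigma^2/2)^p}{p!}=e^{s^2\sigma^2/2}$. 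For a general centered $D$ I would remove the odd moments by symmetrization: with $D'$ an independent copy, Jensen's inequality gives $\mathrm{E}e^{sD}\le\mathrm{E}e^{s(D-D')}$, and $D-D'$ is symmetric with $\|D-D'\|_{\theta_2}\le 2\sigma$ (from $\mathrm{E}(D-D')^{2p}\le 2^{2p}\,\mathrm{E}D^{2p}$), producing a sub-Gaussian MGF bound for $D$ at the cost of an explicit numerical constant.

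Next I would lift this scalar fact to $f(Z)$ by tensorizing over the $n$ coordinates. The natural device is the sub-additivity of entropy applied to $e^{sf}$, $\mathrm{Ent}(e^{sf})\le\mathrm{E}\sum_{k=1}^n\mathrm{Ent}_k(e^{sf})$, where $\mathrm{Ent}_k$ conditions on all coordinates except the $k$-th. Each conditional entropy is controlled by the centered conditional variable $D_{f,Z_k}$: averaging over the remaining coordinates is an expectation, so convexity of the exponential transfers the scalar estimate of the previous paragraph and bounds $\mathrm{Ent}_k(e^{sf})$ by a multiple of $s^2\|D_{f,Z_k}(z)\|_{\theta_2}^2$. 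Crucially, summing the conditional contributions \emph{before} taking the supremum over $z$ yields the quantity $\sum_{k}\|D_{f,Z_k}(z)\|_{\theta_2}^2$ inside a single supremum, which is what produces the sharp $\sup_{z}\sum_{k}$ form rather than the weaker $\sum_{k}\sup_{z}$ form. Feeding the resulting differential inequality for $\varphi(s)=\log\mathrm{E}e^{s(f-\mathrm{E}f)}$ into the Herbst argument then integrates to the desired Gaussian MGF bound.

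The main obstacle I anticipate is twofold, and both parts concern sharpness rather than feasibility. First, one must carry the numerical constant through the symmetrization and the entropy-to-$D_{f,Z_k}$ estimate without slack, so that the final parameter is exactly $8\sup_z\sum_k\|D_{f,Z_k}(z)\|_{\theta_2}^2$; a careless symmetrization or entropy bound easily shifts the constant. Second, and more delicate, is preserving the $\sup_z\sum_k$ ordering: a plain Doob-martingale MGF argument forces a deterministic per-step bound and thereby yields the larger $\sum_k\sup_z\|D_{f,Z_k}(z)\|_{\theta_2}^2$, so the entropy-tensorization route — bounding the summed conditional entropies as a whole and only then taking the supremum over the background coordinates — is what I regard as essential to match the statement exactly.
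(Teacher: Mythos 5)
Your skeleton --- entropy tensorization plus a Herbst-type integration, chosen deliberately because a Doob-martingale MGF argument would only give the weaker $\sum_k\sup_z$ form --- is the right family of argument: it is exactly the machinery the paper uses to prove the companion sub-exponential result (Corollary \ref{lem:maurer2021some}), and indeed the paper never proves Proposition \ref{cor:fxcon} itself but imports it from Corollary 4 of \cite{lei2020non}. The genuine gap is in your tensorization step. You assert that ``convexity of the exponential'' transfers your scalar MGF estimate for $D_{f,Z_k}(z)$ into the conditional-entropy bound $\mathrm{Ent}_k(e^{sf})\le\tfrac{c s^{2}}{2}\|D_{f,Z_k}(z)\|_{\theta_2}^{2}\,\mathrm{E}_k e^{sf}$ that the Herbst argument requires. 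No such implication is available for free: an MGF bound on a random variable does not by itself control its entropy with a usable constant (the cheap routes, e.g.\ $\mathrm{Ent}(e^{sD})\le s\,\mathrm{E}[De^{sD}]$ plus Cauchy--Schwarz, produce a bound that is linear rather than quadratic in $s$, which diverges when fed into the integral representation \eqref{eq:logmgf}). You need an explicit entropy-from-MGF bridge: either the Jensen-based inequality $S(Y)\le\log \mathrm{E}e^{2Y}$, which is precisely the paper's inequality \eqref{eq:entropyin}, or the fluctuation representation of entropy as a double integral of the tilted variance followed by a term-by-term moment expansion, which is what Step 1 of the paper's proof of Corollary \ref{lem:maurer2021some} does in the $\theta_1$ case. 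Without one of these, your middle paragraph is an assertion, not a proof.

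Moreover, even after patching the gap with the bridge $S(sY)\le\log\mathrm{E}e^{2sY}$, your constants do not close. That bridge converts a variance proxy $\nu$ into $S(sY)\le 2s^{2}\nu=\tfrac{s^{2}(4\nu)}{2}$, i.e.\ it costs a factor $4$; and your symmetrization gives the per-coordinate proxy $\nu_k=4\|D_{f,Z_k}(z)\|_{\theta_2}^{2}$, costing another factor $4$ relative to $\sigma_k^2$. Composing these through the subadditivity \eqref{eq:subadditivity} gives $S(\gamma f)\le 8\gamma^{2}\sup_{z}\sum_k\|D_{f,Z_k}(z)\|_{\theta_2}^{2}$, and integrating \eqref{eq:logmgf} yields $\operatorname{subG}\bigl(16\sup_{z}\sum_k\|D_{f,Z_k}(z)\|_{\theta_2}^{2}\bigr)$ with tail $e^{-t^{2}/(32\sup_{z}\sum_k\|D_{f,Z_k}(z)\|_{\theta_2}^{2})}$ --- a factor of $2$ worse than the claimed $\operatorname{subG}(8\,\cdot)$ and $e^{-t^{2}/(16\,\cdot)}$. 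Since the entire point of this proposition is its sharp constants (Remark 1 uses them to beat Maurer's $64en\sigma^{2}/\sqrt{\pi}$ with $16n\sigma^{2}$), this loss is not cosmetic bookkeeping. To land on $8$ you must avoid crude symmetrization: estimate $\mathrm{E}e^{sD}$ directly from the $\theta_2$-moment bounds, controlling the odd moments by Cauchy--Schwarz between adjacent even moments, $\mathrm{E}|D|^{2p+1}\le(\mathrm{E}D^{2p}\,\mathrm{E}D^{2p+2})^{1/2}$, together with AM--GM, so that the per-coordinate variance proxy comes out near $2\sigma^{2}$ rather than $4\sigma^{2}$; that sharper scalar computation (or an equivalently sharp direct entropy bound) is exactly the content of the result the paper cites instead of proving.
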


Based on $\| \cdot \|_{{\theta_1}}$-norm of $\{D_{f,  Z_i}(z)\}_{i=1}^n$, we present the constant-sharper sub-exponential concentration for $f(Z)$ via sub-Gamma conditions. A centralized r.v. $X$ is sub-Gamma ($X \sim \textrm{sub}\Gamma(\eta,M)$) with the variance factor $\eta>0$ and the scale parameter $M>0$  if
$\log ({\rm{E}}e^{sX})\leq \frac{\eta s^{2}}{2({1}-M|s|)},~\forall~0<|s|<M^{-1}$.

\begin{corollary}\label{lem:maurer2021some}
If $\{D_{f, Z_{i}}(z)\}_{i=1}^n$ have finite sub-exponential norm for all ${z \in \mathcal{Z}}$, then
$f(Z) - {\rm{E}}f(Z) \sim \operatorname{sub\Gamma} \left(2{\sup_{z \in \mathcal{Z}} \sum_{i = 1}^n {\left\| {{D_{f,{Z_i}}}(z)} \right\|_{{\theta_1}}^2} }, {\max_{i \in [n]}\sup_{z \in \mathcal{Z}} \left\| {{D_{f,{Z_i}}}(z)} \right\|_{{\theta_1}}}\right)$. Moreover,
\begin{align}\label{eq:SUBG}
    \operatorname{P} \left\{f(Z) - {\rm{E}}f(Z)>2\bigg( { \sup\limits_{z \in \mathcal{Z}} \sum_{i = 1}^n {\left\| {{D_{f,{Z_i}}}(z)} \right\|_{{\theta_1}}^2} }t\bigg)^{1/2}+{\max_{i \in [n]} \sup\limits_{z \in \mathcal{Z}} \left\| {{D_{f,{Z_i}}}(z)} \right\|_{{\theta_1}}} t\right\} \leq  e^{-t},t \ge 0,~\text{and}
\end{align}
\begin{align}\label{eq:SUBG1}
\operatorname{P} \left\{ {f(Z) - {\rm{E}}f(Z) > t} \right\} \le e^{{{{ - {t^2}}}/({{4\sup_{z \in {\cal Z}} \sum_{i = 1}^n {\| {{D_{f,{Z_i}}}(z)} \|_{{\theta_1}}^2}  + 2\max_{i \in [n]} \sup_{z \in {\cal Z}} {{\| {{D_{f,{Z_i}}}(z)} \|}_{{\theta_1}}}t}})}}.
\end{align}
\end{corollary}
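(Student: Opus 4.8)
The plan is to control the moment generating function (MGF) of $f(Z)-\mathrm{E}f(Z)$ and then read off the sub-Gamma parameters, proceeding exactly along the lines that yield Proposition \ref{cor:fxcon}; the only structural change is that each per-coordinate increment is now controlled through the sub-exponential norm $\|\cdot\|_{\theta_1}$ rather than the sub-Gaussian norm $\|\cdot\|_{\theta_2}$, so the Gaussian MGF estimate is replaced by a sub-Gamma one. Concretely, I would start from the Doob martingale decomposition $f(Z)-\mathrm{E}f(Z)=\sum_{i=1}^n\Delta_i$ with $\Delta_i=\mathrm{E}[f(Z)\mid Z_{1:i}]-\mathrm{E}[f(Z)\mid Z_{1:i-1}]$, and use the tower property to factor $\mathrm{E}e^{s(f(Z)-\mathrm{E}f(Z))}$ into a product of conditional MGFs of the $\Delta_i$. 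Each conditional MGF is then bounded, via Jensen's inequality, by a supremum over $z$ of the MGF of the centered conditional version $D_{f,Z_i}(z)$ of \eqref{eq:identy}; this reduction is precisely the mechanism behind Proposition \ref{cor:fxcon}.

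The genuinely new ingredient is the MGF estimate for a single increment in terms of its $\theta_1$-norm. Fix $z$ and $i$, write $W=D_{f,Z_i}(z)$ and $\tau=\|W\|_{\theta_1}$. By \eqref{eq:identy}, $W$ is centered, and by definition of the $\theta_1$-norm we have $\mathrm{E}|W|^p\le \tau^p\, p!$ for all $p\ge 1$. Expanding the exponential, discarding the vanishing linear term, and bounding each remaining summand by $(|s|\tau)^p$ gives, for $|s|\tau<1$,
\begin{align*}
\mathrm{E}e^{sW}\le 1+\sum_{p\ge 2}(|s|\tau)^p = 1+\frac{(|s|\tau)^2}{1-|s|\tau}, \qquad\text{hence}\qquad \log \mathrm{E}e^{sW}\le \frac{\tau^2 s^2}{1-\tau|s|}=\frac{2\tau^2 s^2}{2(1-\tau|s|)},
\end{align*}
so that $W\sim\operatorname{sub\Gamma}(2\tau^2,\tau)$. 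This is the sub-exponential analogue of the sub-Gaussian increment bound underlying Proposition \ref{cor:fxcon}.

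Feeding this into the martingale machinery, summing the increment log-MGFs while bounding $\|D_{f,Z_i}(z)\|_{\theta_1}\le b:=\max_{i\in[n]}\sup_{z\in\mathcal{Z}}\|D_{f,Z_i}(z)\|_{\theta_1}$ in every denominator and collecting the numerators as $\sum_i\|D_{f,Z_i}(z)\|_{\theta_1}^2\le V:=\sup_{z\in\mathcal{Z}}\sum_{i=1}^n\|D_{f,Z_i}(z)\|_{\theta_1}^2$, yields
\begin{align*}
\log \mathrm{E}e^{s(f(Z)-\mathrm{E}f(Z))}\le \frac{V s^2}{1-b|s|}=\frac{(2V)s^2}{2(1-b|s|)},\qquad 0<|s|<1/b,
\end{align*}
which is exactly the claimed $\operatorname{sub\Gamma}(2V,b)$ property. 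I expect the main obstacle to lie in this final aggregation: a naive sequential bound that replaces each past coordinate by a worst case only produces the larger proxy $\sum_i\sup_z\|D_{f,Z_i}(z)\|_{\theta_1}^2$, whereas the statement requires the sharper $\sup_z\sum_i\|D_{f,Z_i}(z)\|_{\theta_1}^2$. Keeping the summation inside the supremum is the delicate step and relies on the refined conditional-MGF control of \cite{lei2020non} that already underpins Proposition \ref{cor:fxcon}; the sub-Gamma increment bound above slots into that framework in place of the sub-Gaussian one.

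Finally, the two tail inequalities follow from the sub-Gamma MGF by the standard Chernoff argument $\operatorname{P}\{f(Z)-\mathrm{E}f(Z)>t\}\le\inf_{0<s<1/b}\exp\{Vs^2/(1-b|s|)-st\}$. Optimizing over $s$ (equivalently, invoking the standard sub-Gamma tail lemma with variance factor $2V$ and scale $b$) gives $\operatorname{P}\{f(Z)-\mathrm{E}f(Z)>2\sqrt{Vt}+bt\}\le e^{-t}$, which is \eqref{eq:SUBG}, and the cruder-but-cleaner form $\operatorname{P}\{f(Z)-\mathrm{E}f(Z)>t\}\le\exp\{-t^2/(4V+2bt)\}$, which is \eqref{eq:SUBG1}. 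Both are routine once the sub-Gamma property is established.
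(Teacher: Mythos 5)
Your per-increment estimate and your final Chernoff step are both sound: from $\mathrm{E}|W|^{p}\le p!\,\tau^{p}$ one indeed gets $\log \mathrm{E}e^{sW}\le \tau^{2}s^{2}/(1-\tau|s|)$ for $|s|<1/\tau$, i.e. $W\sim\operatorname{sub\Gamma}(2\tau^{2},\tau)$, and once the sub-Gamma property of $f(Z)-\mathrm{E}f(Z)$ is in hand, \eqref{eq:SUBG} and \eqref{eq:SUBG1} follow by routine optimization (the paper invokes Lemma 5.1 of \cite{zhang2020concentration} for precisely this). The genuine gap is the aggregation step, which you yourself flag as ``the delicate step'' and then do not carry out. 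The Doob martingale / tower argument you set up provably delivers only the weaker variance proxy: conditioning on $Z_{1:i-1}$ and applying Jensen's inequality over the future coordinates bounds $\mathrm{E}[e^{s\Delta_{i}}\mid Z_{1:i-1}]$ by $\sup_{z}\mathrm{E}\,e^{sD_{f,Z_{i}}(z)}\le \exp\{s^{2}\sup_{z}\|D_{f,Z_{i}}(z)\|_{\theta_{1}}^{2}/(1-b|s|)\}$, and multiplying these $n$ bounds yields $\log\mathrm{E}e^{s(f(Z)-\mathrm{E}f(Z))}\le s^{2}\sum_{i}\sup_{z}\|D_{f,Z_{i}}(z)\|_{\theta_{1}}^{2}/(1-b|s|)$. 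Each factor of the telescoping product is maximized over its own worst configuration $z$, so what comes out is $\sum_{i}\sup_{z}$, which dominates (and in general strictly exceeds) the claimed $V=\sup_{z}\sum_{i}\|D_{f,Z_{i}}(z)\|_{\theta_{1}}^{2}$. Writing ``$\sum_{i}\|D_{f,Z_i}(z)\|_{\theta_1}^2\le V$'' and ``relies on the refined conditional-MGF control of \cite{lei2020non}'' does not close this; that interchange of sum and supremum is exactly the missing ingredient, and no martingale factorization supplies it.

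The paper gets the sharper constant by a structurally different mechanism: Maurer's entropy method. One represents the log-MGF as an integral of entropies, $\log\mathrm{E}[e^{t(f(Z)-\mathrm{E}f(Z))}]=t\int_{0}^{t}S(\gamma f(Z))\gamma^{-2}d\gamma$ (see \eqref{eq:logmgf}), and then applies the subadditivity of entropy \eqref{eq:subadditivity}, $S(f(Z))\le \mathrm{E}_{f(Z)}[\sum_{k}S(D_{f,Z_{k}}(Z))]$. The crucial feature is that the sum over coordinates stays inside a single tilted expectation over the joint variable $Z$; after bounding each entropy by $S(Y)\le \|Y\|_{\theta_{1}}^{2}/(1-\|Y\|_{\theta_{1}})^{2}$ (the paper's Step 1, which plays the role of your per-increment MGF bound but at the level of entropy), the quantity $\sum_{i}\|D_{f,Z_{i}}(Z)\|_{\theta_{1}}^{2}$ is evaluated at one common random configuration $Z$ and can therefore be bounded by the constant $\sup_{z}\sum_{i}\|D_{f,Z_{i}}(z)\|_{\theta_{1}}^{2}$. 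That is how $\sup_{z}\sum_{i}$ arises, and it is what your decomposition structurally cannot reproduce. To repair your proof you would need either to reproduce this entropy argument, or to state and verify the actual result of \cite{lei2020non} and check that its proof extends from the $\theta_{2}$-norm to the $\theta_{1}$-norm setting; neither is a routine substitution.
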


\textbf{Remark 1.} The constant coefficients in the conclusion of Lemma \ref{lem1} are not good enough. For example, the constants in Theorem 3 in \cite{maurer2021some} for Gaussian distribution (i.e. r.v. $X\sim N(\mu,\sigma^{2})$ with mean $\mu$ and variance $\sigma^{2}$) and in Theorem 3 in \cite{maurer2021some} for Laplace distribution (i.e. r.v. $X\sim {\rm{Laplace}}(\mu,\lambda)$ with mean $\mu$ and $|X-\mu|\sim {\rm{Exp}}(\lambda^{-1})$ ) are
\begin{center}
    $\frac{64en\sigma^{2}}{\sqrt{\pi}}\sup_{p\geq1}\frac{\left[\Gamma(\frac{p+1}{2})\right]^{2/p}}{p}
    =\frac{64en\sigma^{2}}{\sqrt{\pi}}$, $4e^{2}n\lambda^{2}\sup_{p\geq1}\frac{(p!)^{2/p}}{p^{2}}+2e\lambda\sup_{p\geq1}\frac{(p!)^{1/p}}{p}
=4e^{2}n\lambda^{2}+2e\lambda$
\end{center}
respectively.  However, our Corollary \ref{cor:fxcon} improves the constants as  $16n\sigma^{2}$ in Gaussian distribution, and our Corollary \ref{lem:maurer2021some} improves the constants as $4n\lambda^{2}+2\lambda$ in Laplace distribution respectively.

The next theorem is our main result. We get finite sample bounds of MLE under the Hessian and Lipschitz conditions, derived by the proposed sharper and constant-specified McDiarmid types inequalities (Proposition \ref{cor:fxcon} and Corollary \ref{lem:maurer2021some}) with unbounded difference conditions.
\begin{theorem}[Sharp concentration inequalities of MLE]\label{thm:MLE}
     Suppose the MLE $\widehat \theta$ is a function of data in \eqref{eq:MLE}. Assume conditions in \eqref{eq:condition} for $l(x,\theta )$, then we get $\widehat \theta-\mathrm{E}\widehat \theta \sim \operatorname{subG}({16c_{l}^{2}/(c_{H}^{2}n^{2}) \sum_{k = 1}^n {\left\| d({X_k},{Y_k}) \right\|_{{\theta _2}}^2} })$
\begin{align}\label{eq:subG}
~\text{and    }~{\mathop{\rm P}\nolimits} \{|\widehat \theta-\mathrm{E}\widehat \theta|>t\} \le 2e^{\frac{ - {t^2}}{{16c_{l}^{2}/(c_{H}^{2}n^{2}) \sum_{k = 1}^n {\left\| d({X_k},{Y_k}) \right\|_{{\theta _2}}^2} }}},\quad \forall t \ge 0~\text{or}~
\end{align}
$\widehat \theta-\mathrm{E}\widehat \theta\sim \operatorname{sub\Gamma}\left({2 c_l^2}/{(c_H^2n^2)} \sum_{k = 1}^n {\left\| d({X_k},{Y_k}) \right\|_{{\theta _1}}^2}, {c_l}/{(c_Hn)}\max_{1\le k \le n}{\left\| d({X_k},{Y_k}) \right\|_{{\theta _1}}}\right)$ and
\begin{align}\label{eq:subE}
    \operatorname{P}\{|\widehat \theta-\mathrm{E}\widehat \theta|>t\} & \le 2e^{\frac{-t^{2}}{{4 c_l^2}/{(c_H^2n^2)} \sum_{k = 1}^n {\left\| d({X_k},{Y_k}) \right\|_{{\theta _1}}^2}+{2c_l}/{(c_Hn)}\max_{1\le k \le n}{\left\| d({X_k},{Y_k}) \right\|_{{\theta _1}}} t}}
\end{align}
granted that the sub-Gaussian or sub-exponential norms exist in the concentration inequality.
\end{theorem}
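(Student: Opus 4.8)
The plan is to observe that the MLE is exactly a data function $\widehat\theta=f(X_1,\dots,X_n)$ of the type to which Proposition \ref{cor:fxcon} and Corollary \ref{lem:maurer2021some} apply, so that the entire theorem reduces to bounding the centered conditional versions $D_{f,X_k}(x)$ in $\|\cdot\|_{\theta_2}$- and $\|\cdot\|_{\theta_1}$-norm, uniformly in $x$. The one structural input I need is the bounded-difference estimate \eqref{eq5}: under conditions \eqref{eq:condition}, replacing the $k$-th coordinate of the data by an arbitrary point perturbs the minimizer by at most $c_l\,d(\cdot,\cdot)/(n c_H)$, and this is a deterministic, pointwise inequality valid for every configuration of the remaining coordinates.

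First I would fix $x\in\mathcal{X}^n$ and an index $k$, let $X_k'=Y_k$ be an independent copy of $X_k$, and rewrite \eqref{eq:identy} as $D_{f,X_k}(x)=\mathrm{E}\!\left[f(S^k_{X_k}x)-f(S^k_{X_k'}x)\mid X_k\right]$. Because $S^k_{X_k}x$ and $S^k_{X_k'}x$ differ only in their $k$-th slot, \eqref{eq5} gives the pointwise bound $|f(S^k_{X_k}x)-f(S^k_{X_k'}x)|\le (c_l/(nc_H))\,d(X_k,X_k')$. Applying conditional Jensen to the convex map $t\mapsto|t|^{2p}$ and then the tower property,
\[
\mathrm{E}\,|D_{f,X_k}(x)|^{2p}
\;\le\; \mathrm{E}\,|f(S^k_{X_k}x)-f(S^k_{X_k'}x)|^{2p}
\;\le\; \Big(\frac{c_l}{nc_H}\Big)^{2p}\,\mathrm{E}\,d(X_k,X_k')^{2p}.
\]
Dividing by $(2p-1)!!$, taking the $2p$-th root, and then the supremum over $p\ge1$ yields $\|D_{f,X_k}(x)\|_{\theta_2}\le (c_l/(nc_H))\,\|d(X_k,Y_k)\|_{\theta_2}$; the identical computation with plain exponent $p$ and normalizer $(p!)^{1/p}$ gives $\|D_{f,X_k}(x)\|_{\theta_1}\le (c_l/(nc_H))\,\|d(X_k,Y_k)\|_{\theta_1}$. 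Since these bounds are free of $x$, summing over $k$ controls $\sup_x\sum_k\|D_{f,X_k}(x)\|_{\theta_2}^2\le (c_l^2/(c_H^2 n^2))\sum_k\|d(X_k,Y_k)\|_{\theta_2}^2$ and its $\theta_1$ analogue.

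To finish I would substitute these suprema into the two concentration results. Proposition \ref{cor:fxcon} turns the $\theta_2$-bound directly into the sub-Gaussian statement and the one-sided tail with denominator $16(c_l^2/(c_H^2 n^2))\sum_k\|d(X_k,Y_k)\|_{\theta_2}^2$; applying the same reasoning to $-f$, whose centered conditional versions are $-D_{f,X_k}$ and therefore carry identical norms, together with a union bound, produces the two-sided inequality \eqref{eq:subG}. In the same way Corollary \ref{lem:maurer2021some} delivers the $\operatorname{sub\Gamma}$ variance factor $2(c_l^2/(c_H^2 n^2))\sum_k\|d(X_k,Y_k)\|_{\theta_1}^2$ and scale $(c_l/(c_H n))\max_{k}\|d(X_k,Y_k)\|_{\theta_1}$, and after symmetrization the tail \eqref{eq:subE}.

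I expect the only delicate step to be the passage from the pointwise bounded-difference inequality \eqref{eq5} to a moment bound on the centered object $D_{f,X_k}$. One must keep in mind that $D_{f,X_k}(x)$ centers only the $k$-th coordinate and is not itself a raw difference, and it is precisely the conditional-Jensen step that legitimately converts the distance $d(X_k,Y_k)$ between a point and its independent copy into the norms $\|d(X_k,Y_k)\|_{\theta_2}$ and $\|d(X_k,Y_k)\|_{\theta_1}$ appearing in the statement. Everything else is bookkeeping of the universal constants inherited verbatim from Proposition \ref{cor:fxcon} and Corollary \ref{lem:maurer2021some}, subject to the standing proviso that the relevant $\theta_2$- or $\theta_1$-norm of $d(X_k,Y_k)$ is finite (automatic for sub-Gaussian, respectively sub-exponential, data).
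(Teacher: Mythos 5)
Your proposal is correct and follows essentially the same route as the paper: bound the centered conditional differences $D_{f,X_k}(x)$ via the deterministic bounded-difference estimate \eqref{eq5} together with conditional Jensen's inequality, obtaining $\|D_{f,X_k}(x)\|_{\vartheta}\le \frac{c_l}{c_H n}\|d(X_k,Y_k)\|_{\vartheta}$ uniformly in $x$, and then feed these bounds into Proposition \ref{cor:fxcon} and Corollary \ref{lem:maurer2021some}. The only difference is organizational rather than mathematical: the paper re-derives \eqref{eq5} inside the proof from the conditions \eqref{eq:condition} (using the stationarity of $\widehat\theta$ and $\widetilde\theta$, the Hessian lower bound $c_H$, and the score Lipschitz constant $c_l$) and packages the Jensen/norm-comparison step as a separate intermediate result (Corollary \ref{cor:normsgse}), whereas you cite \eqref{eq5} as given and carry out the moment computation inline.
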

Theorem \ref{thm:MLE} improves and extends Theorem 3.3 in \cite{Miao2020} from unknown constants to known constants in exponential inequalities, and from sub-Gaussian to sub-exponential samples. Using the bias formula of MLE in \cite{mardia1999bias}, we obtain a high-probability error bound of $|\widehat{\theta}-{\theta ^*}|$.
\begin{corollary}[Sharp oracle inequalities of MLE]\label{cor:error}
Under conditions in Theorem \ref{thm:MLE} and $\Theta=\mathbb{R}^1$, with probability at least $1-\delta$ we have  for sub-Gaussian and sub-exponential samples, respectively,
\begin{small}
\begin{align*}
|\widehat{\theta}-{\theta ^*}|\le \frac{4c_{l}}{\sqrt{n}c_{H}}\sqrt{\frac{1}{n}\sum_{k=1}^{n}\|d(X_{k},Y_{k})\|_{\theta_{2}}^{2}\log(\frac{2}{\delta})}
+\frac{|\kappa(X,\theta^*)|}{2nI^{2}(\theta^*)} + o\left(\frac{1}{n}\right),
\end{align*}
\begin{align}\label{eq:subEbias}
|\widehat{\theta}-{\theta ^*}| \leq \frac{2c_{l}}{\sqrt{n}c_{H}}\sqrt{\frac{1}{n}\sum_{i=1}^{n}\|d(X_{k},Y_{k})\|_{\theta_{1}}^{2}\log(\frac{1}{\delta})}
+\frac{1}{n}\left(\frac{\max\limits_{1\leq k\leq n}\|d(X_{k},Y_{k})\|_{\theta_{1}}\log({\delta}^{-1})}{c_{H}/c_{l}}
+\frac{|\kappa(X,\theta^*)|}{2I^{2}(\theta^*)}\right) + o\left(\frac{1}{n}\right),
\end{align}
\end{small}
where $\kappa(X,\theta^*):=2{\rm{E}}[\dot{l}(X,\theta^*)\ddot{l}(X,\theta^*)]-{\rm{E}}\dddot{l}(X,\theta^*)$.
\end{corollary}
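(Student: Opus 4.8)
The plan is to decompose the error as $\widehat{\theta}-\theta^* = (\widehat{\theta}-\mathrm{E}\widehat{\theta}) + (\mathrm{E}\widehat{\theta}-\theta^*)$, bounding the first term by the concentration inequalities already established in Theorem \ref{thm:MLE} and the second (bias) term by the asymptotic expansion of \cite{mardia1999bias}. For the stochastic part, I would invert the tail bound \eqref{eq:subG} by setting the right-hand side equal to $\delta$ and solving for $t$: from $2e^{-t^2/V}=\delta$ with $V={16c_l^2}/{(c_H^2 n^2)}\sum_{k=1}^n\|d(X_k,Y_k)\|_{\theta_2}^2$, one gets $t=\sqrt{V\log(2/\delta)}$, which rearranges to the leading $\frac{4c_l}{\sqrt{n}c_H}\sqrt{\frac{1}{n}\sum_k\|d(X_k,Y_k)\|_{\theta_2}^2\log(2/\delta)}$ term in the sub-Gaussian display. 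For the sub-exponential case I would instead invert \eqref{eq:subE}: setting $2\exp\{-t^2/(A+Bt)\}=\delta$ where $A={4c_l^2}/{(c_H^2n^2)}\sum_k\|d(X_k,Y_k)\|_{\theta_1}^2$ and $B={2c_l}/{(c_Hn)}\max_k\|d(X_k,Y_k)\|_{\theta_1}$ yields a quadratic in $t$, and using the elementary bound that $t^2\le A\log(2/\delta)+Bt\log(2/\delta)$ implies $t\le\sqrt{A\log(2/\delta)}+B\log(2/\delta)$ gives the two leading terms of \eqref{eq:subEbias} directly.

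For the bias term, the key input is the classical second-order expansion $\mathrm{E}\widehat{\theta}-\theta^* = \frac{\kappa(X,\theta^*)}{2nI^2(\theta^*)}+o(1/n)$ from \cite{mardia1999bias}, where $\kappa(X,\theta^*)=2\mathrm{E}[\dot{l}(X,\theta^*)\ddot{l}(X,\theta^*)]-\mathrm{E}\dddot{l}(X,\theta^*)$ is exactly the quantity defined in the statement. Taking absolute values and invoking the triangle inequality, this contributes the deterministic $\frac{|\kappa(X,\theta^*)|}{2nI^2(\theta^*)}$ term (plus the stated $o(1/n)$ remainder) to both displays. I would then combine the two pieces on the event of probability at least $1-\delta$ on which the concentration bound holds, absorbing the bias contribution into the $O(1/n)$ grouping so that in the sub-exponential line the $\frac{|\kappa(X,\theta^*)|}{2I^2(\theta^*)}$ appears inside the common $\frac{1}{n}(\cdots)$ factor alongside the $\max_k\|d(X_k,Y_k)\|_{\theta_1}\log(\delta^{-1})/(c_H/c_l)$ term coming from the linear-in-$t$ part of the sub-Gamma tail.

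The main obstacle I anticipate is \emph{not} the stochastic inversion, which is routine, but rather the legitimacy of plugging in the Mardia--Marshall bias expansion under only the exponential/second-moment regularity assumed here. That expansion was derived under classical smoothness and integrability conditions (third differentiability of $l$, interchangeability of expectation and differentiation, finiteness of $I(\theta^*)$ and of the moments appearing in $\kappa$), so I would need to confirm that conditions \eqref{eq:condition} together with $\Theta=\mathbb{R}^1$ and the assumed existence of the relevant sub-Gaussian/sub-exponential norms suffice to justify the $o(1/n)$ control of the remainder uniformly. A secondary subtlety is bookkeeping the constants: the factor $4$ versus $2$ in front of the two leading terms traces back to the $\sqrt{16}=4$ in \eqref{eq:subG} against $\sqrt{4}=2$ in \eqref{eq:subE}, and the choice $\log(2/\delta)$ versus $\log(1/\delta)$ reflects the one-sided versus two-sided tail—so I would need to be careful that the sub-exponential display uses the one-sided bound \eqref{eq:SUBG1}-style constant while the sub-Gaussian display pays the factor $2$ for the two-sided event. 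Once these are pinned down, assembling the final inequalities is mechanical.
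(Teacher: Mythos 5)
Your decomposition $\widehat{\theta}-\theta^*=(\widehat{\theta}-\mathrm{E}\widehat{\theta})+(\mathrm{E}\widehat{\theta}-\theta^*)$, the inversion of \eqref{eq:subG} for the sub-Gaussian display, and the use of the bias expansion (8) of \cite{mardia1999bias} coincide exactly with the paper's proof; in particular, the paper also just cites that expansion without re-verifying its regularity conditions, so the obstacle you flag applies equally to the original argument and is not something the paper resolves. The genuine divergence --- and the place where your proposal, as written, fails to prove the statement --- is the sub-exponential case. You propose to invert the exponent-form tail bound \eqref{eq:subE} (or its one-sided analogue \eqref{eq:SUBG1}), i.e.\ solve $t^{2}=(A+Bt)\log(2/\delta)$ and use the elementary implication $t\le\sqrt{A\log(2/\delta)}+B\log(2/\delta)$. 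That implication is valid, but with $B=2c_{l}/(c_{H}n)\max_{k}\|d(X_{k},Y_{k})\|_{\theta_{1}}$ it yields the linear term $\frac{2c_{l}}{nc_{H}}\max_{k}\|d(X_{k},Y_{k})\|_{\theta_{1}}\log(2/\delta)$, which carries twice the coefficient claimed in \eqref{eq:subEbias} (and $\log(2/\delta)$ in place of $\log(\delta^{-1})$). Your proposed fix of switching to the one-sided constant does not repair this: inverting \eqref{eq:SUBG1} still gives $2\sqrt{v\log(1/\delta)}+2c\log(1/\delta)$, because the passage from the sub-Gamma MGF to the exponent form $\exp\{-t^{2}/(4v+2ct)\}$ is itself lossy in the linear regime.

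The paper avoids this by never inverting \eqref{eq:subE}: it applies the Bernstein-form deviation inequality \eqref{eq:SUBG} of Corollary \ref{lem:maurer2021some} directly, with $t=\log(1/\delta)$ and with the MLE values $v=c_{l}^{2}/(c_{H}^{2}n^{2})\sum_{k}\|d(X_{k},Y_{k})\|_{\theta_{1}}^{2}$ and $c=c_{l}/(c_{H}n)\max_{k}\|d(X_{k},Y_{k})\|_{\theta_{1}}$, which produces exactly $2\sqrt{v\log(1/\delta)}+c\log(1/\delta)$ --- the two leading terms of \eqref{eq:subEbias} with coefficient $1$, not $2$, on the linear term. (Strictly speaking the paper applies this one-sided bound to the two-sided quantity $|\widehat{\theta}-\mathrm{E}\widehat{\theta}|$ without paying the factor $2$ in $\delta$; your attention to that point is if anything more careful than the original.) So to obtain the corollary with its stated constants you must route the sub-exponential case through \eqref{eq:SUBG}; with that single substitution, the rest of your argument --- triangle inequality, placing the bias term $\frac{|\kappa(X,\theta^*)|}{2I^{2}(\theta^*)}$ inside the common $\frac{1}{n}(\cdots)$ grouping, and the $o(1/n)$ remainder --- goes through verbatim and matches the paper.
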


\subsection{Log-truncated MLE}

However, if the sub-Gaussian norm of data in Theorem \ref{thm:MLE} does not exist, then the sub-Gaussian behaviours of the MLE $\widehat \theta$ in Theorem \ref{thm:MLE} and Corollary \ref{cor:error} fail to hold.

Below we introduce a nice method proposed by \cite{Catoni2012}, which requires only a finite second moment through a log-truncation technique to obtain the sub-Gaussian behaviour of some robust MLE. It is completely different from the MLE and emprical mean estimator. It is well known that the empirical mean $\overline{\mu}_{n}$ from likelihood function of Gaussian data is just the solution $\theta\in\mathbb{R}$ of the equation $\sum_{i=1}^{n}(X_{i}-\theta)=0$.  Catoni's approach is to replace the left side of the previous equation with another strictly increasing function $Q_{n,\beta}(\theta)=\sum_{i=1}^{n}\psi(\beta\dot{l}(X_{i},\theta))$ on $\theta$, where $\beta\in\mathbb{R}$ is a tuning parameter and $\psi:\mathbb{R}\rightarrow\mathbb{R}$ is an antisymmetric increasing function. The idea is that if $X_{i}$ grows much faster than $\psi(x)$, the influence of outliers due to heavy tails will be weakened. There are many influence functions $\psi(x)$. A popular choice of $\psi(x)$ is
\begin{align}\label{eq1}
\psi(x)=\textrm{sign}(x)\log(1+|x|+|x|^{2}/2).
\end{align}
The intuition is that if $\psi(x)$ grows much slower than $x$ like some log function, then it reduces the value of the exponential-scaled outliers as the normal data points. The log-truncated function $\psi(x)$ remains to be unbounded, thus it largely retains the data fluctuation in an unbounded way, while the classical bounded truncated M-functions such as Huber loss function would miss a lot of information in the data. Based on \eqref{eq1}, let
${{\widehat Z}_\beta }(\theta ) = \frac{1}{{n\beta}}\sum_{i = 1}^n \psi(\beta\dot{l}(X_{i},\theta))$ for any $\theta \in \Theta \subset \mathbb{R}$. We aim to estimate ${\theta ^*}  =  {\arg \min }_{\theta  \in \Theta } {\rm{E}}[l(X,\theta )]$. Define the $Z$-estimator ${{\widehat \theta }_\beta }$ as the solution of the estimating equation
\begin{align}\label{eq:CANTONZ}
    {{\widehat \theta}_{\beta }} = \{ \theta :{{\widehat Z}_\beta }(\theta ) = 0,\theta  \in \mathbb{R}\} ,
\end{align}
where $\beta$ is the tuning parameter, which will be determined in the following theorem. Next, we establish the confidence interval and the convergence rate of ${{\widehat \theta }_{\beta } }$ for a sufficiently large sample.

\begin{theorem}\label{eq:cantoniG}
For $\widehat \theta_{\beta}$ defined as \eqref{eq:CANTONZ} from i.i.d. data $\{X_i\}_{i=1}^n$ with $l(x,\theta )$ in \eqref{eq:MLE}, and $c(x)$ is from
\begin{align}\label{equ9}
    |\dot{l}(x,\theta)-\dot{l}(x,\theta^*)| \leq c(x)|\theta-\theta^*|,~ c(x)>0.
\end{align}
Define $\theta _{+}$ and $\theta _{-}$ as the smallest and largest solutions of $n\beta^{2}{\rm{E}}c^{2}(X_{1})(\theta-\theta^*)^{2}+n\beta {\rm{E}}c(X_{1})|\theta-\theta^*|+n\beta^{2}I(\theta^*)+\log(\delta^{-1})=0$ under $I(\theta^*)<\infty$. Then, {\rm{(i)}}. When $n\geq\frac{4{\rm{E}}c^{2}(X_{1})\log(\delta^{-1})}{[{\rm{E}}c(X_{1})]^{2}-4\beta^{2}{\rm{E}}c^{2}(X_{1})I(\theta^*)}>0$,
\begin{center}
    $\mathrm{P} (\theta_{-} \leq {\widehat \theta }_{\beta } \leq \theta_{+}) \ge 1 - 2\delta \quad$ for any $\quad \delta \in(0,1/2).$
\end{center}
{\rm{(ii)}}. If $c(x)$ is related to $x$ and $\beta=\sqrt{\frac{I^{-1}(\theta^*)\log(\delta^{-1})}{n(1+\frac{4{\rm{E}}c^{2}(X_{1})
\log(\delta^{-1})}{n[{\rm{E}}c(X_{1})]^{2}-4{\rm{E}}c^{2}(X_{1})\log(\delta^{-1})})}}$, then with probability at least $1-2\delta$,
\begin{align}\label{eq:T1}
    |{{\widehat \theta }_\beta }-{\theta ^*}|<2\sqrt{\frac{I(\theta^*)\log(\delta^{-1})}{n[{\rm{E}}c(X_{1})]^{2}-4{\rm{E}}c^{2}(X_{1})
\log(\delta^{-1})}}.
\end{align}
{\rm{(iii)}}. If $c(x)\equiv c>0$, pick $\beta=\sqrt{\frac{2
I^{-1}(\theta^*)\log(\delta^{-1})}{n\left(1+({2\log(\delta^{-1})})/({n-2\log(\delta^{-1})})\right)}}$, then with probability at least $1-2\delta$
\begin{align}\label{eq:T1}
    |{{\widehat \theta }_\beta }-{\theta ^*}|<\sqrt{\frac{2I(\theta^*)\log(\delta^{-1})}{nc^{2}-2c^{2}
\log(\delta^{-1})}}.
\end{align}
\end{theorem}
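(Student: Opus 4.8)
The plan is to treat $\widehat{\theta}_\beta$ as a Z-estimator and run Catoni's influence-function argument. The starting point is the elementary two-sided bound for the chosen $\psi$, namely $-\log(1-x+x^2/2)\le \psi(x)\le \log(1+x+x^2/2)$, which exponentiates to $\mathrm{E}\,e^{\pm\psi(\beta\dot{l}(X,\theta))}\le 1\pm\beta\,\mathrm{E}\dot{l}(X,\theta)+\tfrac{\beta^2}{2}\mathrm{E}\dot{l}^2(X,\theta)$. Since $\psi$ is increasing and $\dot{l}(x,\theta)$ is nondecreasing in $\theta$ (because $\ddot{l}\ge 0$), the map $\theta\mapsto\widehat{Z}_\beta(\theta)$ is monotone, so the root $\widehat{\theta}_\beta$ is well defined and the events $\{\widehat{\theta}_\beta>\theta\}$ and $\{\widehat{Z}_\beta(\theta)<0\}$ coincide (likewise on the other side). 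This reduces the control of $\widehat{\theta}_\beta$ to the control of the sign of the empirical sum $\sum_i\psi(\beta\dot{l}(X_i,\theta))$ at two deterministic points $\theta_\pm$.

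Next I would apply exponential Markov at a fixed $\theta$: by independence and the bound above, $\mathrm{P}\{\widehat{Z}_\beta(\theta)<0\}\le \big(1-\beta\,\mathrm{E}\dot{l}(X,\theta)+\tfrac{\beta^2}{2}\mathrm{E}\dot{l}^2(X,\theta)\big)^n\le \exp\!\big(n[-\beta\,\mathrm{E}\dot{l}(X,\theta)+\tfrac{\beta^2}{2}\mathrm{E}\dot{l}^2(X,\theta)]\big)$, and symmetrically for $\mathrm{P}\{\widehat{Z}_\beta(\theta)>0\}$. The drift $\mathrm{E}\dot{l}(X,\theta)$ and second moment $\mathrm{E}\dot{l}^2(X,\theta)$ then have to be expressed through the quantities in the theorem. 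Using $\mathrm{E}\dot{l}(X,\theta^*)=0$ with the Lipschitz bound \eqref{equ9} I would write $\mathrm{E}\dot{l}(X,\theta)=\mathrm{E}[\dot{l}(X,\theta)-\dot{l}(X,\theta^*)]$ and control it linearly in $\theta-\theta^*$ through $\mathrm{E}c(X_1)$, while expanding $\dot{l}(X,\theta)=\dot{l}(X,\theta^*)+[\dot{l}(X,\theta)-\dot{l}(X,\theta^*)]$ and invoking the information identity $\mathrm{E}\dot{l}^2(X,\theta^*)=I(\theta^*)$ yields $\mathrm{E}\dot{l}^2(X,\theta)\le 2I(\theta^*)+2\,\mathrm{E}c^2(X_1)(\theta-\theta^*)^2$. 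Substituting these and requiring the exponent to be at most $-\log(\delta^{-1})$ produces exactly the quadratic $n\beta^2\mathrm{E}c^2(X_1)t^2+n\beta\,\mathrm{E}c(X_1)t+n\beta^2I(\theta^*)+\log(\delta^{-1})=0$ in $t=\theta-\theta^*$, whose two roots are $\theta_-$ and $\theta_+$. A real pair of roots exists precisely when the discriminant is nonnegative, and that inequality rearranges into the stated condition $n\ge 4\mathrm{E}c^2(X_1)\log(\delta^{-1})/([\mathrm{E}c(X_1)]^2-4\beta^2\mathrm{E}c^2(X_1)I(\theta^*))$; a union bound over the two tails then gives $\mathrm{P}(\theta_-\le\widehat{\theta}_\beta\le\theta_+)\ge 1-2\delta$, which is part (i).

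For parts (ii) and (iii) the remaining work is algebraic: substitute the prescribed $\beta$ into the quadratic, solve for its roots in closed form, and simplify. Writing $L=\log(\delta^{-1})$, the chosen $\beta$ is calibrated so that the discriminant factor $n-4n\beta^2 I(\theta^*)-4L$ collapses to a clean expression, after which $|\theta_\pm-\theta^*|$ reduces to the claimed radius; the constant-$c$ case (iii) is the specialization $\mathrm{E}c(X_1)=c$, $\mathrm{E}c^2(X_1)=c^2$, and (ii) is the same computation carried out symbolically. I would also record here the regularity facts used silently above, namely that one may differentiate under the expectation so that $\mathrm{E}\dot{l}(X,\theta^*)=0$ and $\mathrm{E}\dot{l}^2(X,\theta^*)=I(\theta^*)=\mathrm{E}\ddot{l}(X,\theta^*)$.

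The step I expect to be the genuine obstacle is the drift control, i.e. obtaining a bound on $\mathrm{E}\dot{l}(X,\theta)$ of the correct sign and linear order in $\theta-\theta^*$. The crude Lipschitz estimate from \eqref{equ9} only gives $|\mathrm{E}\dot{l}(X,\theta)|\le \mathrm{E}c(X_1)\,|\theta-\theta^*|$, an upper bound, whereas the exponential-Markov argument needs the drift to be pushed away from zero in the direction that makes the relevant tail small; making this quantitative (so that $\mathrm{E}c(X_1)$ legitimately plays the role of the drift slope, consistent with $\mathrm{E}c(X_1)\leftrightarrow I(\theta^*)=\mathrm{E}\ddot{l}(X,\theta^*)$ via the mean-value form $\dot{l}(x,\theta)-\dot{l}(x,\theta^*)=\ddot{l}(x,\xi)(\theta-\theta^*)$) is what prevents the bound from being vacuous, and is where the monotonicity of $\dot{l}$ in $\theta$ and the near-tightness of \eqref{equ9} must be used with care. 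The secondary nuisance is the bookkeeping of the discriminant and the root selection, which must be matched to the sign conventions of the two tails so that $\theta_-$ and $\theta_+$ genuinely sandwich $\widehat{\theta}_\beta$ rather than landing on the same side of $\theta^*$.
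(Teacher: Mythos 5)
Your plan reproduces the paper's argument step for step: the bound $\psi(x)\le\log(1+x+x^{2}/2)$ and its mirror image, a Chernoff--Markov step at fixed $\theta$, expansion of the drift $\mathrm{E}\dot{l}(X_{1},\theta)$ and of $\mathrm{E}\dot{l}^{2}(X_{1},\theta)$ around $\theta^{*}$ via \eqref{equ9} and $I(\theta^{*})$, a quadratic in $t=|\theta-\theta^{*}|$ whose roots define $\theta_{\pm}$, monotonicity of $\widehat{Z}_{\beta}$ to convert sign control at $\theta_{\pm}$ into $\theta_{-}\le\widehat{\theta}_{\beta}\le\theta_{+}$, and finally optimization of $\beta$. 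So there is no methodological divergence to report.

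The obstacle you single out, however, is not a side remark: it is a genuine gap, and it is one that the paper's own proof does not overcome either. As you say, the Chernoff exponent can only be driven below zero by a drift term that is negative and of magnitude at least linear in $|\theta-\theta^{*}|$, since the variance contribution $n\beta^{2}\mathrm{E}c^{2}(X_{1})t^{2}+n\beta^{2}I(\theta^{*})$ and the term $\log(\delta^{-1})$ are positive; \eqref{equ9} only caps the drift from above, and indeed forces $I(\theta)\le\mathrm{E}c(X_{1})$, so nothing in the hypotheses keeps the drift slope away from zero. The paper hides this by a sign slip: it substitutes the upper bound \eqref{equ3}, $\mathrm{E}\dot{l}(X_{1},\theta)\le\mathrm{E}[c(X_{1})]\,|\theta-\theta^{*}|$, into the exponent and arrives at the quadratic $g(t)=n\beta^{2}\mathrm{E}c^{2}(X_{1})t^{2}+n\beta\,\mathrm{E}c(X_{1})\,t+n\beta^{2}I(\theta^{*})+\log(\delta^{-1})$, whose four coefficients are all positive, so $g(t)>0$ for every $t\ge0$ and it has no nonnegative root; the displayed root \eqref{equ4} is in fact the smaller positive root of the sign-flipped quadratic $n\beta^{2}\mathrm{E}c^{2}(X_{1})t^{2}-n\beta\,\mathrm{E}c(X_{1})\,t+n\beta^{2}I(\theta^{*})+\log(\delta^{-1})$. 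In other words, the paper tacitly uses $\mathrm{E}c(X_{1})$ as a \emph{lower} bound on the drift slope, which is the reverse of what \eqref{equ9} supplies; this is exactly the failure mode you predicted (including the worry that the two roots of the stated quadratic land on the same side of $\theta^{*}$ --- they do: both are negative). A correct proof requires an additional hypothesis, e.g.\ a two-sided version of \eqref{equ9} or $\inf_{\theta}\mathrm{E}\ddot{l}(X_{1},\theta)\ge a>0$ on the working interval (your mean-value form $\dot{l}(x,\theta)-\dot{l}(x,\theta^{*})=\ddot{l}(x,\xi)(\theta-\theta^{*})$ is the natural way to produce it), and then $a$, not $\mathrm{E}c(X_{1})$, must appear as the linear coefficient in the quadratic, in the sample-size condition of (i), and in the radii of (ii) and (iii). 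So your proposal is incomplete at precisely this point, but your diagnosis is accurate, and as stated the theorem's conclusion with $\mathrm{E}c(X_{1})$ in the denominator is not justified by the paper's hypotheses.
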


\textbf{Remark 2.} Regarding $c(x)$ in Theorem \ref{eq:cantoniG}, we give examples of calculating $c(x)$ by three distributions: Gaussian, Pareto and Weibull distributions if $\Theta$ is a compact set excluded $\{0\}$.

(a) If $X_{1}\sim \text{N}(0,\sigma^{2})$, then
$l(x,\theta)=l(x,\sigma^{2})=-\frac{1}{2}\log(2\pi\sigma^{2})+\frac{x^{2}}{2\sigma^{2}},
~\dot{l}(x,\sigma^{2})=\frac{1}{2\sigma^{2}}-\frac{x^{2}}{2\sigma^{4}}.$
In \eqref{equ9}, we find that for  ${\sigma^{2},{{\sigma}^{*2}} \in\Theta}$
$|\dot{l}(x,\sigma^{2})-\dot{l}(x,{{\sigma}^{*}}^{2})| = |\frac{(\sigma^{2}+{{\sigma}^{*}}^{2})x^{2}-\sigma^{2}{{\sigma}^{*}}^{2}}
{2\sigma^{4}{{\sigma}^{*}}^{4}}(\sigma^{2}-{{\sigma}^{*}}^{2})|\leq c(x)
|\sigma^{2}-{{\sigma}^{*}}^{2}|,$ where $c(x)=\sup_{\sigma^{2},{{\sigma}^{*2}} \in\Theta}|\frac{(\sigma^{2}+{{\sigma}^{*}}^{2})x^{2}-\sigma^{2}{{\sigma}^{*}}^{2}}
{2\sigma^{4}{{\sigma}^{*}}^{4}}|$. We need ${\rm{E}}c^{2}(X_{1})\propto {\rm{E}}X_{1}^2<\infty$.

(b) If $X_1$ is Pareto r.v., whose probability density is $p(x,k)=\frac{kx_{\text{min}}^{k}}{x^{k+1}}\cdot {\rm{1}}_{x>x_{\text{min}}}$, where $x_{\text{min}}$ is the smallest possible value of $x$ and $x_{\text{min}},~k>0$. Thus, for ${k,k^*\in\Theta}$, it gives
$l(x,k)=-\log k-k\log x_{\min}+(k+1)\log x$, $\dot{l}(x,k)=-\frac{1}{k}+\log\frac{x}{x_{\min}},
~\dot{l}(x,k^*)=-\frac{1}{k^*}+\log\frac{x}{x_{\min}}.$
From \eqref{equ9}, we have
$|\dot{l}(x,k)-\dot{l}(x,k^*)|=|(kk^*)^{-1}(k-k^*)|\leq c(x)|k-k^*|$, with $c(x)=\sup\nolimits_{k,k^*\in\Theta}(kk^*)^{-1}$ being constant.

(c) If $X_1$ is Weibull r.v. with probability density $p(x,\lambda,k)=\frac{k}{\lambda}(\frac{x}{\lambda})^{k-1}{e^{-(x/\lambda)}}^{k}\cdot {\rm{1}}_{x\geq0}$, where $\lambda > 0$ is the scale parameter and $k > 0$ is the shape parameter. Weibull distribution is an exponential distribution when $k=1$ and a Rayleigh distribution when $k=2$. For simplicity, assuming that $k\geq2$ is a known parameter, we get
$l(x,\lambda)=-\log k +\log\lambda+(k-1)\log x-(k-1)\log \lambda-x^{k}\lambda^{-k}$ and
$\dot{l}(x,\lambda)=\frac{2-k}{\lambda}+\frac{k x^{k}}{\lambda^{k+1}}.$ By \eqref{equ9},
$|\dot{l}(x,\lambda)-\dot{l}(x,\lambda^{*})|=|(\frac{(k-2)\lambda^{k}{\lambda^{*}}^{k}-kx^{k}
\sum_{i=0}^{k}{\lambda^{*}}^{k-i}\lambda^{i}}{\lambda^{k+1}{\lambda^*}^{k+1}})(\lambda-\lambda^*)|
\leq c(x)|\lambda-\lambda^*|$ for ${\lambda,\lambda^*\in\Theta}$, where $c(x)=\sup_{\lambda,\lambda^*\in\Theta}|\frac{(k-2)\lambda^{k}{\lambda^{*}}^{k}-kx^{k}
\sum_{i=0}^{k}{\lambda^{*}}^{k-i}\lambda^{i}}{\lambda^{k+1}{\lambda^*}^{k+1}}|$. We need ${\rm{E}}c^{2}(X_{1})\propto {\rm{E}}X_{1}^k<\infty$.

\begin{corollary}\label{cX}
For any one-dimensional exponential family distribution with density
\begin{align}\label{equ10}
p(x,\theta)=h(x)\exp\{\theta T(x)-A(\theta)\},
\end{align}
where $h(x)>0$, $T(x)$ and $A(\theta)$ [$\theta$ is called the natural parameter of the exponential family] are known functions. Then the choice of $c(x)$ in Theorem \ref{eq:cantoniG} is a constant independent of $x$.
\end{corollary}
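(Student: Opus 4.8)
The plan is to exploit the defining structural feature of the natural (canonical) exponential family: in the parametrization \eqref{equ10} the data enter the score only through an additive term that carries no dependence on $\theta$, so that term is annihilated when one forms the difference $\dot{l}(x,\theta)-\dot{l}(x,\theta^*)$ appearing in \eqref{equ9}. This reduces the whole question to a statement about the cumulant function $A(\theta)$ alone.

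First I would write out the negative log-likelihood. From $l(x,\theta):=-\log p(x,\theta)$ and \eqref{equ10},
\[
l(x,\theta) = -\log h(x) - \theta\,T(x) + A(\theta),
\]
so differentiating once with respect to $\theta$ gives the score $\dot{l}(x,\theta) = -T(x) + A'(\theta)$. The decisive observation is that the only $x$-dependent piece, $-T(x)$, is itself free of $\theta$; hence for any $\theta,\theta^*\in\Theta$,
\[
\dot{l}(x,\theta) - \dot{l}(x,\theta^*) = A'(\theta) - A'(\theta^*),
\]
which is identically independent of $x$.

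Next I would recast this difference in the Lipschitz form demanded by \eqref{equ9}. Since $A$ is smooth on the interior of the natural parameter space, with $A'(\theta)=\E_{\theta}T(X)$ and $A''(\theta)=\operatorname{Var}_{\theta}T(X)\ge 0$, the mean value theorem yields $|A'(\theta)-A'(\theta^*)| = A''(\xi)\,|\theta-\theta^*|$ for some $\xi$ between $\theta$ and $\theta^*$. Taking $\Theta$ compact (the standing assumption of Remark 2) and setting
\[
c := \sup_{\xi\in\Theta} A''(\xi) = \sup_{\xi\in\Theta}\operatorname{Var}_{\xi}T(X) < \infty,
\]
I obtain $|\dot{l}(x,\theta)-\dot{l}(x,\theta^*)|\le c\,|\theta-\theta^*|$ for every $x$, so the admissible choice $c(x)\equiv c$ is a constant independent of $x$, as claimed.

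I do not anticipate a genuine obstacle: the result is essentially a one-line consequence of the exponential-family structure, the cancellation of $-T(x)$ being the entire content. The only point needing a little care is the finiteness of $c$, i.e. the boundedness of $A''$; this is secured by the continuity of $A''$ on the interior of the parameter space together with the compactness of $\Theta$ (the same hypothesis already used to evaluate $c(x)$ in the Gaussian, Pareto and Weibull examples of Remark 2). Absent a compactness restriction on $\Theta$ one would instead have to impose $\sup_{\Theta}A''<\infty$ as an explicit side condition.
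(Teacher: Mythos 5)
Your proposal is correct and takes essentially the same route as the paper's proof: write the score as $\dot{l}(x,\theta)=-T(x)+\dot{A}(\theta)$, note that $T(x)$ cancels in the difference $\dot{l}(x,\theta)-\dot{l}(x,\theta^*)$, and apply the mean value theorem to $\dot{A}$ so that $c(x)\equiv\sup_{\breve{\theta}\in\Theta}|\ddot{A}(\breve{\theta})|$ serves as the constant. Your extra care about the finiteness of this supremum (via compactness of $\Theta$ and continuity of $\ddot{A}$, or an explicit side condition $\sup_{\Theta}\ddot{A}<\infty$ otherwise) is a minor refinement that the paper leaves implicit.
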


\section{Proofs}
\begin{proof}[Proof of Corollary \ref{lem:maurer2021some}]
Let the \emph{tilted expectation} $\mathrm{E}_{Y}$ be ${\mathrm{E}_Y}[Z] = \mathrm{E}\left[ {Z \cdot \frac{{{e^Y}}}{{\mathrm{E}{e^Y}}}} \right]$ with the exponential weighted v.s. ${\frac{{{e^Y}}}{{\mathrm{E}{e^Y}}}}$. The proof is based on the entropy  of a r.v. $Y$ defined by
$S(Y):=\mathrm{E}_{Y}[Y]-\log \mathrm{E}\left[e^{Y}\right]=\mathrm{E}\left[ {Y \cdot \frac{{{e^Y}}}{{\mathrm{E}{e^Y}}}} \right]-\log \mathrm{E}\left[e^{Y}\right],$
which is free of centering, i.e. $S(Y-\mathrm{E}Y)=S(Y)$. Suppose that $Y$ has zero mean, we have by Jensen's inequality
\begin{align}\label{eq:entropyin}
S(Y)=\mathrm{E}_{Y}\left[\log(\frac{e^{Y}}{\mathrm{E}e^{Y}})\right]  \le \log \mathrm{E}_{Y}\left[\frac{e^{Y}}{\mathrm{E}e^{Y}}\right]=\log \mathrm{E}\left[\frac{e^{Y}{{e^Y}}}{\mathrm{E}e^{Y}{\mathrm{E}{e^Y}}}\right]\le \log \mathrm{E}e^{2 Y},
\end{align}
where the last inequality is also derived by  Jensen's inequality ${\rm{E}}{e^Y} \ge {e^{{\rm{E}}Y}} = 1.$

From (5) in \cite{maurer2021some}, the concentration inequality is by Cramer-Chernoff method with the logarithm of the
MGF represented as the integral of entropy
\begin{align}\label{eq:logmgf}
\log \mathrm{E}\left[e^{t(Y-\mathrm{E}[Y])}\right] = \log \mathrm{E}\left[e^{t Y}\right] = t \int_{0}^{t} {S(\gamma Y) }/{\gamma^{2}}d \gamma,~\forall~t >0
\end{align}
and the \emph{subadditivity of entropy} (see (6) in \cite{maurer2021some})
\begin{align}\label{eq:subadditivity}
S(f(Z)) \le {{\rm{E}}_{f(Z)}}[ {\sum\nolimits_{k = 1}^n S ({D_{f,{Z_k}}}(Z))}],
\end{align}
where we denote $S({D_{f,{Z_k}}}(Z)):=S({D_{f,{Z_k}}}(z))|_{z=Z}$. Next, the proof has two steps.

\textbf{Step 1}. Show that $S(Y) \le \frac{{\left\| Y \right\|_{{\theta_1}}^2}}{{{{(1 - \left\| Y \right\|_{{\theta_1}}^{})}^2}}}$ if ${\left\| Y \right\|_{{\theta_1}}}<1$ and $\mathrm{E} Y=0$.

 By the definition of the tilted expectation, we have
\begin{align}\label{eq:tilt}
&~~~~{{\rm{E}}_{sY}}\left[ {{{\left( {Y - {{\rm{E}}_{sY}}[Y]} \right)}^2}} \right]=\frac{{{\rm{E}}\left[ {\{ {Y^2} - {\rm{2}}Y\left( {{{\rm{E}}_{sY}}[Y]} \right) + {{\left( {{{\rm{E}}_{sY}}[Y]} \right)}^2}\} {e^{sY}}} \right]}}{{{\rm{E}}{e^{sY}}}}\nonumber\\
& = \frac{1}{{{\rm{E}}{e^{sY}}}}\left( {{\rm{E[}}{Y^2}{e^{sY}}] - {\rm{2E[}}Y{e^{sY}}] \cdot \frac{{{\rm{E[}}Y{e^{sY}}]}}{{{\rm{E}}{e^{sY}}}} + {{\left( {\frac{{{\rm{E[}}Y{e^{sY}}]}}{{{\rm{E}}{e^{sY}}}}} \right)}^2\cdot \rm{E}{e^{sY}}}} \right)\nonumber\\
& = \frac{{{\rm{E[}}{Y^2}{e^{sY}}]}}{{{\rm{E}}{e^{sY}}}} - {\rm{2}}\frac{{{\rm{E[}}Y{e^{sY}}]}}{{{\rm{E}}{e^{sY}}}} \cdot \frac{{{\rm{E[}}Y{e^{sY}}]}}{{{\rm{E}}{e^{sY}}}} + {\left( {\frac{{{\rm{E[}}Y{e^{sY}}]}}{{{\rm{E}}{e^{sY}}}}} \right)^2} \nonumber\\
&= \frac{{{\rm{E[}}{Y^2}{e^{sY}}]}}{{{\rm{E}}{e^{sY}}}} - {\left( {\frac{{{\rm{E[}}Y{e^{sY}}]}}{{{\rm{E}}{e^{sY}}}}} \right)^2}\le {{\rm{E}}_{sY}}\left[ {{Y^2}} \right]= \frac{{{\rm{E}}\left[ {{Y^2}{e^{sY}}} \right]}}{{{\rm{E}}{e^{sY}}}}\le {\rm{E}}\left[ {{Y^2}{e^{sY}}} \right],
\end{align}
where the last inequality is from Jensen's inequality $\mathrm{E} e^{\theta Y} \geq e^{\theta \mathrm{EY}}=1$ due to $\mathrm{E} Y=0$.

By ${\left\| Y \right\|_{{\theta_1}}} = \sup_{k \ge 1} {\left( {{{{\rm{E}}|Y|^k}}/{{k!}}} \right)^{1/k}}$, we have
\begin{align}\label{eq:sbEMGF}
{\rm{E}}\left[ {{Y^2}{e^{sY}}} \right] &= {\rm{E}}\left[ {\sum\limits_{{\rm{k}} = 0}^\infty  {\frac{{{s^k}}}{{k!}}} {Y^{k + 2}}} \right] \leq \sum\limits_{k = 0}^\infty  {\frac{{{s^k}{\rm{E|}}Y{|^{k + 2}}}}{{k!}}} \le \mathop {\sup }\limits_{k} \sum\limits_{k = 0}^\infty  {\frac{{{s^k}{\rm{E|}}Y{|^{k + 2}}}}{{k!}}}\\
&= \sum\limits_{k = 0}^\infty  {\frac{{{s^k}(k + 2)!\left\| Y \right\|_{{\theta_1}}^{k + 2}}}{{k!}}}=\left\| Y \right\|_{{\theta_1}}^2\sum\limits_{k = 0}^\infty  {(k + 2)(k + 1)\left\| Y \right\|_{{\theta_1}}^k{s^k}}.
\end{align}

Note that $\int_{0}^{1} \int_{t}^{1} s^{k} d s d t=\frac{1}{k+2}$, the fluctuation representation of entropy implies
\begin{align}\label{eq:sy}
S(Y)& = \int_0^1 {\left( {\int_t^1 {{{\rm{E}}_{sY}}} \left[ {{{\left( {Y - {{\rm{E}}_{sY}}[Y]} \right)}^2}} \right]ds} \right)} dt\nonumber\\
[\text{By}~\eqref{eq:tilt}]~& \le \int_0^1 {\left( {\int_t^1 {\rm{E}} \left[ {{Y^2}{e^{sY}}} \right]ds} \right)} dt \leq \left\| Y \right\|_{{\theta_1}}^2\sum\limits_{k = 0}^\infty  {(k + 2)(k + 1)\left\| Y \right\|_{{\theta_1}}^k\int_0^1 {\int_t^1 {{s^k}} ds} dt} \nonumber\\
& = \left\| Y \right\|_{{\theta_1}}^2\sum\limits_{k = 0}^\infty  {(k + 1)\left\| Y \right\|_{{\theta_1}}^k}  = \frac{{\left\| Y \right\|_{{\theta_1}}^2}}{{{{(1 - \left\| Y \right\|_{{\theta_1}}^{})}^2}}},0 < \left\| Y \right\|_{{\theta_1}} < 1.
\end{align}

\textbf{Step 2}. By the subadditivity of entropy \eqref{eq:subadditivity} and the integral of entropy \eqref{eq:logmgf}, we have
\begin{align*}
&~~~~\log {\rm{E}}\left[ {{e^{t(f(Z) - {\rm{E}}f(Z))}}} \right] = t\int_0^t {\frac{{S(\gamma f(Z))d\gamma }}{{{\gamma ^2}}}}  \le t\int_0^t {\frac{1}{{{\gamma ^2}}}} {{\rm{E}}_{\gamma f(Z)}}\left[ {\sum\limits_{i = 1}^n S \left( {\gamma {D_{f,{Z_i}}}(Z)} \right)} \right]d\gamma \\
& \le t\int_0^t {\frac{{{\gamma ^2}}}{{{\gamma ^2}}}} {{\rm{E}}_{\gamma f(Z)}}\left[ {\sum\limits_{i = 1}^n {\frac{{\left\| {{D_{f,{Z_i}}}(Z)} \right\|_{{\theta_1}}^2}}{{{{(1 - \gamma \left\| {{D_{f,{Z_i}}}(Z)} \right\|_{{\theta_1}}^{})}^2}}}} } \right]d\gamma ,~\forall~\left\| {{D_{f,{Z_i}}}(Z)} \right\|_{{\theta_1}} < 1\\
& \le t\int_0^t  {{\rm{E}}_{\gamma f(Z)}}\left[ {\sum\limits_{i = 1}^n {\frac{{\left\| {{D_{f,{Z_i}}}(Z)} \right\|_{{\theta_1}}^2}}{{{{(1 - \gamma \mathop {\max }\limits_{i \in [n]} \mathop {\sup }\limits_{z \in \mathcal{Z}} \left\| {{D_{f,{Z_i}}}(z)} \right\|_{{\theta_1}})}^2}}}} } \right]d\gamma\\
& \le \int_0^t\frac{{t {{{\rm{E}}_{\gamma f(Z)}}[\mathop {\sup }\limits_{z \in \mathcal{Z}} \sum\limits_{i = 1}^n {\left\| {{D_{f,{Z_i}}}(z)} \right\|_{{\theta_1}}^2} ]}  }}{{{{(1 - \gamma \mathop {\max }\limits_{i \in [n]} \mathop {\sup }\limits_{z \in \mathcal{Z}} \left\| {{D_{f,{Z_i}}}(z)} \right\|_{{\theta_1}}^{})}^2}}}d\gamma = \frac{{2\mathop {\sup }\limits_{z \in \mathcal{Z}} \sum\limits_{i = 1}^n {\left\| {{D_{f,{Z_i}}}(z)} \right\|_{{\theta_1}}^2} }}{{{1 - t \mathop {\max }\limits_{i \in [n]} \mathop {\sup }\limits_{z \in \mathcal{Z}} \left\| {{D_{f,{Z_i}}}(z)} \right\|_{{\theta_1}}}}} \cdot \frac{{{t^2}}}{2},
\end{align*}
where $0< \gamma  \le t < ( {\mathop {\max }\limits_{i \in [n]} \mathop {\sup }\limits_{z \in Z} \left\| {{D_{f,{Z_i}}}(z)} \right\|_{{\theta_1}}} )^{ - 1}$. The sub-Gamma definition gives
$f(Z) - {\rm{E}}f(Z) \sim \operatorname{sub\Gamma}(2{\mathop {\sup }\limits_{z \in \mathcal{Z}} \sum\limits_{i = 1}^n {\left\| {{D_{f,{Z_i}}}(z)} \right\|_{{\theta_1}}^2} }, {\mathop {\max }\limits_{i \in [n]} \mathop {\sup }\limits_{z \in \mathcal{Z}} \left\| {{D_{f,{Z_i}}}(z)} \right\|_{{\theta_1}}})$. Finally, the \eqref{eq:SUBG} and \eqref{eq:SUBG1} obtained have similar conclusions to Lemma 5.1 in \cite{zhang2020concentration}.
\end{proof}

\begin{corollary}\label{cor:normsgse}
Let $\{X_{i}\}_{i=1}^n$ be independent $\mathcal{X}$-valued r.v.s. distributed as $\mu_{i}$ in a metric probability space $(\mathcal{X}, d)$. Put $X=\left(X_{1}, \ldots, X_{n}\right)$ and $X^{\prime}$ is an i.i.d. copy of $X$. Suppose $f: \mathcal{X}^{n} \rightarrow \mathbb{R}$ has \emph{conditional Lipschitz constant} $L$ w.r.t. the metric $\rho$ on $\mathcal{X}^{n}$ defined by $\rho(x, y)=\sum_{i=1}^n d\left(x_{i}, y_{i}\right)$, i.e.
\begin{equation}\label{eq:CSLC}
    \|\mathrm{E}[f\left(S_{X_{k}}^{k} (x)\right)-f(S_{X_{k}^{\prime}}^{k} (x))|X_k] \|_{\vartheta} \le L\|\mathrm{E}[d(X_k, X_k^{\prime})|X_k]\|_\vartheta,~\vartheta=\theta_{1}~\text{or}~\theta_{2},
\end{equation}
where $S_{X_{k}}^{k} (x)$ is the substitution operator defined in \eqref{eq:identy}. Then we have
\begin{center}
$f(X)-\mathrm{E}f(X) \sim \operatorname{subG}(8{ L^{2} \sum\limits_{i=1}^{n} \|d(X_i, X_i^{\prime})\|_{\theta_{2}}^2})$ if ${\mathop {\max }\limits_{i \in [n]} \left\| d(X_i, X_i^{\prime}) \right\|_{\theta_{2}}} <\infty$,
\end{center}
 and
$f(X)-\mathrm{E}f(X) \sim \operatorname{sub\Gamma}(2L^2{\sum_{i = 1}^n {\left\| d(X_i, X_i^{\prime}) \right\|_{\theta_{1}}^2} }, L{\mathop {\max }\limits_{i \in [n]} \left\| d(X_i, X_i^{\prime}) \right\|_{\theta_{1}}})$ if ${\mathop {\max }\limits_{i \in [n]} \left\| d(X_i, X_i^{\prime}) \right\|_{\theta_{1}}} < \infty$.
\end{corollary}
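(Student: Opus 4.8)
The plan is to read Corollary \ref{cor:normsgse} as a direct specialization of Proposition \ref{cor:fxcon} and Corollary \ref{lem:maurer2021some} to the metric setting, the only real work being to translate the conditional Lipschitz hypothesis \eqref{eq:CSLC} into bounds on the two suprema that drive those two concentration results. The entry point is that, by the definition in \eqref{eq:identy}, the quantity inside the norm on the left of \eqref{eq:CSLC} is exactly $D_{f,X_k}(x)$, so the hypothesis reads $\|D_{f,X_k}(x)\|_\vartheta \le L\|\mathrm{E}[d(X_k,X_k')\mid X_k]\|_\vartheta$ for every fixed $x\in\mathcal{X}^n$, every $k$, and $\vartheta\in\{\theta_1,\theta_2\}$. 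Since $D_{f,X_k}(x)$ is the random-variable-valued object of the earlier results, I already have the hypotheses of those results in hand, modulo rewriting the right-hand side.

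The one genuine step is to replace $\|\mathrm{E}[d(X_k,X_k')\mid X_k]\|_\vartheta$ by $\|d(X_k,X_k')\|_\vartheta$. Because both $\|\cdot\|_{\theta_1}$ and $\|\cdot\|_{\theta_2}$ are suprema over $p$ of normalized $L_p$-norms, it suffices to show $\|\mathrm{E}[d(X_k,X_k')\mid X_k]\|_p \le \|d(X_k,X_k')\|_p$ for every $p\ge 1$. This is conditional Jensen applied to the convex map $t\mapsto |t|^p$: $|\mathrm{E}[d(X_k,X_k')\mid X_k]|^p \le \mathrm{E}[d(X_k,X_k')^p\mid X_k]$, and the tower property gives $\mathrm{E}|\mathrm{E}[d(X_k,X_k')\mid X_k]|^p \le \mathrm{E}[d(X_k,X_k')^p]$. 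Dividing by the relevant normalizer ($(p!)^{1/p}$ for $\theta_1$, $((2p-1)!!)^{1/(2p)}$ for $\theta_2$, using the power $2p\ge 1$ in the latter case) and taking the supremum over $p$ yields $\|\mathrm{E}[d(X_k,X_k')\mid X_k]\|_\vartheta \le \|d(X_k,X_k')\|_\vartheta$. Combined with \eqref{eq:CSLC}, and noting that the resulting bound $L\|d(X_k,X_k')\|_\vartheta$ is free of $x$, I obtain $\sup_{x}\|D_{f,X_k}(x)\|_\vartheta \le L\|d(X_k,X_k')\|_\vartheta$ for each $k$.

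From this uniform-in-$x$ estimate I would form the two driving quantities: $\sup_x\sum_k\|D_{f,X_k}(x)\|_\vartheta^2 \le \sum_k\sup_x\|D_{f,X_k}(x)\|_\vartheta^2 \le L^2\sum_k\|d(X_k,X_k')\|_\vartheta^2$, and $\max_k\sup_x\|D_{f,X_k}(x)\|_{\theta_1} \le L\max_k\|d(X_k,X_k')\|_{\theta_1}$. The finiteness hypotheses $\max_i\|d(X_i,X_i')\|_{\theta_2}<\infty$ and $\max_i\|d(X_i,X_i')\|_{\theta_1}<\infty$ make these finite, so Proposition \ref{cor:fxcon} applies in the $\theta_2$ case and Corollary \ref{lem:maurer2021some} in the $\theta_1$ case. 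Replacing the $D$-based parameters by the larger $d$-based upper bounds then gives the stated sub-Gaussian parameter $8L^2\sum_i\|d(X_i,X_i')\|_{\theta_2}^2$ and sub-Gamma parameters $(2L^2\sum_i\|d(X_i,X_i')\|_{\theta_1}^2,\,L\max_i\|d(X_i,X_i')\|_{\theta_1})$.

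I do not expect a serious obstacle; the substance is the conditional-Jensen norm comparison above, and the remaining point is only a monotonicity remark about the classes themselves. Concretely, if $Y\sim\operatorname{subG}(\sigma_0^2)$ and $\sigma_0^2\le\sigma^2$ then $Y\sim\operatorname{subG}(\sigma^2)$, and likewise $\operatorname{sub\Gamma}(\eta,M)\subseteq\operatorname{sub\Gamma}(\eta',M')$ whenever $\eta\le\eta'$ and $M\le M'$: for $|s|<M'^{-1}$ one has $0<1-M'|s|\le 1-M|s|$, so $\eta s^2/[2(1-M|s|)]\le \eta' s^2/[2(1-M'|s|)]$ and the MGF bound persists. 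This enlargement is exactly what lets me state the conclusion with the $d$-norm parameters rather than the exact $D$-norm suprema, completing the argument.
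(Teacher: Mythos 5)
Your proposal is correct and follows essentially the same route as the paper's proof: identify the left side of \eqref{eq:CSLC} with $D_{f,X_k}(x)$ via \eqref{eq:identy}, use conditional Jensen plus the tower property to pass from $\|\mathrm{E}[d(X_k,X_k')\mid X_k]\|_\vartheta$ to $\|d(X_k,X_k')\|_\vartheta$, and then invoke Proposition \ref{cor:fxcon} and Corollary \ref{lem:maurer2021some}. Your closing remark on the monotonicity of the $\operatorname{subG}$ and $\operatorname{sub\Gamma}$ classes in their parameters is a small extra courtesy the paper leaves implicit, but it does not change the argument.
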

\begin{proof}[Proof of Corollary \ref{cor:normsgse}]
From the identity in \eqref{eq:identy}, we have
\begin{align}\label{eq:identy2}
\|D_{f,  X_k}(z) \|_{\vartheta} &=  \|f\left(x_{1}, \ldots, x_{k-1}, X_{k}, x_{k+1}, \ldots, x_{n}\right) - \mathrm{E}\left[f\left(x_{1}, \ldots, x_{k-1}, X_{k}^{\prime}, x_{k+1}, \ldots, x_{n}\right)\right] \|_{\vartheta}\nonumber\\
&=\|\mathrm{E}[f\left(S_{X_{k}}^{k} (x)\right)-f(S_{X_{k}^{\prime}}^{k} (x))|X_k] \|_{\vartheta}\le L\|\mathrm{E}[d(X_k, X_k^{\prime})|X_k]\|_{\vartheta}~[\text{By~}\eqref{eq:CSLC}].
\end{align}
The conditional Jensen's inequality shows
\begin{align}\nonumber
\mathrm{E}\left[\left|\mathrm{E}\left[d({X_k},{Y_k})\mid {X_k}\right]\right|^{p}\right]&\le \mathrm{E}\left[\mathrm{E}\{\left|d({X_k},{Y_k})\right| \mid {X_k}\}^{p}\right] = \mathrm{E}\left[\mathrm{E}\{\left(\left|d({X_k},{Y_k})\right|^{p}\right)^{1/p} \mid {X_k}\}^{p}\right]\label{eq6}\\
& \le \mathrm{E}\left\{\mathrm{E}\left[\left|d({X_k},{Y_k})\right|^{p} \mid {X_k}\right]\right\}
= \mathrm{E}\left|d({X_k},{Y_k})\right|^{p},~p \geq 1.
\end{align}
Then the definitions of
${\left\| X \right\|_{\theta_{2}}} = \sup\nolimits_{k \ge 1} {[ {\frac{{{2^k}k!}}{{(2k)!}}{\rm{E}}{X^{2k}}} ]^{1/(2k)}}$ and ${\left\| X \right\|_{\theta_{1}}} =\sup\nolimits_{k \ge 1} {( {\frac{{{\rm{E}}|X{|^k}}}{{k! }}} )^{1/k}}$ imply $\|D_{f,  X_k}(z) \|_{\vartheta}  \le L \|d(X_k, X_k^{\prime})]\|_{\vartheta}$ by \eqref{eq:identy2} and \eqref{eq6}. The result follows by applying Proposition \ref{cor:fxcon} and Corollary \ref{lem:maurer2021some}.
\end{proof}

\begin{proof}[Proof of Theorem \ref{thm:MLE}]
Let $X=\left(X_{1}, \ldots, X_{n}\right)$ and denote ${\ddot l_n}(X,\theta ){\rm{ = }}\frac{{{\partial ^2}{l_n}(X,\theta )}}{{\partial {\theta ^2}}}$ with $l_n (X, \theta) := \frac{1}{n} \sum_{i = 1}^n l(X_i, \theta)$. We will show $\widehat \theta=f(X)$ satisfies \eqref{eq:CSLC} with  $L=\frac{{c_l}}{{{c_H}n}}$. Consider $\tilde{\theta}$ as the MLE under turbulence, i.e.
$\dot{l}_n(S_{Y_k}^k (X), \tilde{\theta}) = 0$. By Hessian matrix assumptions and ${{\dot l}_n}(X,\widehat \theta )=0$, there exists $\mathord{\buildrel{\lower3pt\hbox{$\scriptscriptstyle\frown$}} \over \theta }$ between $\widehat{\theta}$ and $\tilde{\theta}$,
\begin{center}
$ \big| {{\dot l}_n}(X,\tilde \theta ) \big| = \big| {{\dot l}_n}(X,\widehat \theta ) + {{\ddot l}_n}(X,\mathord{\buildrel{\lower3pt\hbox{$\scriptscriptstyle\frown$}} \over \theta } )({\tilde \theta  - \widehat \theta }) \big| = \big| {{\ddot l}_n}(X,\mathord{\buildrel{\lower3pt\hbox{$\scriptscriptstyle\frown$}} \over \theta } )({\tilde \theta  - \widehat \theta }) \big| \ge {c_H} |{\tilde \theta  - \widehat \theta }|$
\end{center}
and Lipschitz condition for the score function w.r.t. data
\begin{align*}
    \big| {{\dot l}_n}(X,\tilde \theta ) \big| &= \big| {{\dot l}_n}(X,\tilde \theta ) + {{\dot l}_n}(S_{{Y_k}}^k(X),\tilde \theta ) - {{\dot l}_n}(S_{{Y_k}}^k(X),\tilde \theta ) \big| \\
    &= \big| {{\dot l}_n}(X,\tilde \theta ) - {{\dot l}_n}(S_{{Y_k}}^k(X),\tilde \theta ) \big| \le {c_l} \rho (X,S_{{Y_k}}^k(X))/n = {c_l}d({X_k},{Y_k})/n.
\end{align*}
Thus
$\big| f(S_{{Y_k}}^k(X)) - f(S_{{X_k}}^k(X)) \big| = \big| f(S_{{Y_k}}^k(X)) - f(X) \big| = | \tilde \theta-\widehat \theta | \le \frac{{c_l}{d({X_k},{Y_k})}}{{{c_H}n}}.$\\

Then the \eqref{eq:identy} implies
$\mathrm{E}[| f(S_{X_{k}}^{k} (x))-f(S_{Y_k}^{k} (x)) | \, | \, X_k ]\le \frac{{c_l}\mathrm{E}[d({X_k},{Y_k})|X_k]}{{{c_H}n}}$.
By Corollary \ref{cor:normsgse} with \eqref{eq:CSLC} and $L=\frac{{c_l}}{{{c_H}n}}$, if $\{d({X_k},{Y_k})\}_{k=1}^n$ have finite ${\left\| \cdot \right\|_{\theta_{2}}}$-norm, we get $\widehat \theta-\mathrm{E}\widehat \theta \sim \operatorname{subG}(16c_{l}^{2}/({c_{H}^{2}n^{2}}) \sum_{k = 1}^n {\left\| d({X_k},{Y_k}) \right\|_{{\theta _2}}^2} )$ and \eqref{eq:subG}. If $\{d({X_k},{Y_k})\}_{k=1}^n$ have finite ${\left\| \cdot \right\|_{\theta_{1}}}$-norm, we get $\widehat \theta-\mathrm{E}\widehat \theta\sim \operatorname{sub\Gamma}({2 c_l^2}/{(c_H^2n^2)} \sum_{k = 1}^n {\left\| d({X_k},{Y_k}) \right\|_{{\theta _1}}^2},\\ {c_l}/{(c_Hn)}\max_{1\le k \le n}{\left\| d({X_k},{Y_k}) \right\|_{{\theta _1}}})$ and  \eqref{eq:subE}. So we obtain the result.
\end{proof}

\begin{proof}[Proof of Corollary \ref{cor:error}]
From \eqref{eq:subG} in Theorem \ref{thm:MLE}, it is easy to verify that there is $\delta\in(0,1)$ such that $|\widehat \theta-\mathrm{E}\widehat \theta|\leq\frac{4c_{l}}{nc_{H}}\sqrt{\sum_{k=1}^{n}\|d(X_{k},Y_{k})\|_{\theta_{2}}^{2}\log(2/\delta)}$ holds with high probability. Using (8) in \cite{mardia1999bias}, we have an expression for the bias of MLE
\begin{center}
${\rm{E}}\widehat{\theta}-{\theta ^*}=\frac{1}{2nI^{2}(\theta^*)}\left[2{\rm{E}}(\dot{l}(X,\theta^*)\ddot{l}(X,\theta^*))-{\rm{E}}\dddot{l}(X,\theta^*)\right]
+o(\frac{1}{n})=\frac{\kappa(X,\theta^*)}{2nI^{2}(\theta^*)}+o(\frac{1}{n}),$
\end{center}
where $\kappa(X,\theta^*)=2{\rm{E}}(\dot{l}(X,\theta^*)\ddot{l}(X,\theta^*))-{\rm{E}}\dddot{l}(X,\theta^*)$. Thus, in the sub-Gaussian sample case, we obtain
\begin{align*}
|\widehat{\theta}-{\theta ^*}|=|(\widehat{\theta}-{\rm{E}}\widehat{\theta})+({\rm{E}}\widehat{\theta}-{\theta ^*})|
&\leq |\widehat \theta-\mathrm{E}\widehat \theta|+|{\rm{E}}\widehat{\theta}-{\theta ^*}|\\
&=\frac{4c_{l}}{\sqrt{n}c_{H}}\sqrt{\frac{1}{n}\sum_{k=1}^{n}\|d(X_{k},Y_{k})\|_{\theta_{2}}^{2}\log(\frac{2}{\delta})}
+\frac{|\kappa(X,\theta^*)|}{2nI^{2}(\theta^*)}+o(\frac{1}{n}).
\end{align*}
Similarly, by \eqref{eq:SUBG}, there exists $\delta\in(0,1)$ such that $|\widehat \theta-\mathrm{E}\widehat \theta|\leq \frac{2c_{l}}{nc_{H}}\sqrt{\sum_{i=1}^{n}\|d(X_{k},Y_{k})\|_{\theta_{1}}^{2}\log(\frac{1}{\delta})}
+\frac{c_{l}\max_{1\leq k \leq n}\|d(X_{k},Y_{k})\|_{\theta_{1}}\log(\frac{1}{\delta})}{nc_{H}}$ holds with high probability.
For the sub-exponential case, we have \eqref{eq:subEbias}.
\end{proof}

\begin{proof}[Proof of Theorem \ref{eq:cantoniG}]
Note that $\widehat{\theta}_{\beta}$ is defined as the solution of ${{\widehat Z}_\beta }(\theta )=0$ with the choice of $\psi(x)$ in \eqref{eq1}. Since $\{X_{i}\}_{i=1}^{n}$ is i.i.d and that $\psi(x)\leq\log(1+x+x^{2}/2)$ for all $x\in\mathbb{R}$, then for all $\theta\in \Theta$
\begin{align}\nonumber
{\rm{E}}[e^{{n\beta}{{\widehat Z}_\beta }(\theta )}]&={\rm{E}}[e^{\sum_{i=1}^{n}\psi(\beta\dot{l}(X_{i},\theta))}]
\leq \prod_{i=1}^{n}\left({\rm{E}}[1+\beta\dot{l}(X_{i},\theta)+\frac{\beta^{2}\dot{l}^{2}(X_{i},\theta)}{2}]\right)\\\label{equ2}
&=\left(1+\beta {\rm{E}}\dot{l}(X_{1},\theta)+\frac{\beta^{2}}{2}{\rm{E}}\dot{l}^{2}(X_{1},\theta)\right)^{n}\leq\exp\left\{n\beta {\rm{E}}\dot{l}(X_{1},\theta)+\frac{n\beta^{2}}{2}{\rm{E}}\dot{l}^{2}(X_{1},\theta)\right\}.
\end{align}
By ${\rm{E}}\dot{l}(X_{1},\theta^*)=0$, using the Lipschitz condition, we find
\begin{align}\label{equ3}
{\rm{E}}\dot{l}(X_{1},\theta)={\rm{E}}[\dot{l}(X_{1},\theta)-\dot{l}(X_{1},\theta^*)+\dot{l}(X_{1},\theta^*)]\leq {\rm{E}}[c(X_{1})]|\theta-\theta^*|.
\end{align}
Since ${\rm{E}}\dot{l}^{2}(X_{1},\theta^*)={\rm{Var}}(\dot{l}(X_{1},\theta^*))=:I(\theta^*)$, then
\begin{align*}
{\rm{E}}\dot{l}^{2}(X_{1},\theta)&={\rm{E}}[\dot{l}(X_{1},\theta)-\dot{l}(X_{1},\theta^*)+\dot{l}(X_{1},\theta^*)]^{2}\\
&={\rm{E}}[\dot{l}(X_{1},\theta)-\dot{l}(X_{1},\theta^*)]^{2}+2{\rm{E}}[\dot{l}(X_{1},\theta)-\dot{l}(X_{1},\theta^*)]
\dot{l}(X_{1},\theta^*)+{\rm{E}}\dot{l}^{2}(X_{1},\theta^*).
\end{align*}
The following will be divided into three parts for discussion.\\
(i) If $c(X_{1})$ is related to $X_{1}$, based on the basic inequality and Lipschitz condition, we can obtain ${\rm{E}}\dot{l}^{2}(X_{1},\theta)\leq2[(\theta-\theta^*)^{2}{\rm{E}}c^{2}(X_{1})+I(\theta^*)]$. Combining \eqref{equ2} and \eqref{equ3}, then
\begin{align*}
{\rm{E}}[e^{{n\beta}{{\widehat Z}_\beta }(\theta )}]\leq\exp\{n\beta|\theta-\theta^*|{\rm{E}}c(X_{1})+n\beta^{2}(\theta-\theta^*)^{2}{\rm{E}}c^{2}
(X_{1})+n\beta^{2}I(\theta^*)\}.
\end{align*}
Using the Markov’s inequality, we obtain that, for any fixed $\theta\in \Theta$ and $\delta\in(0,1/2)$,
\begin{align}\label{eq2}
    &\pr \left\{{{n\beta}{{\widehat Z}_\beta }(\theta )}\geq n\beta|\theta-\theta^*|{\rm{E}}c(X_{1})+n\beta^{2}(\theta-\theta^*)^{2}{\rm{E}}c^{2}(X_{1})+n\beta^{2}I(\theta^*)
    +\log(\delta^{-1})\right\}\\\nonumber
    &=\pr \{e^{{n\beta}{{\widehat Z}_\beta }(\theta )}\geq e^{n\beta|\theta-\theta^*|{\rm{E}}c(X_{1})+n\beta^{2}(\theta-\theta^*)^{2}{\rm{E}}c^{2}(X_{1})+n\beta^{2}I(\theta^*)
+\log(\delta^{-1})}\}\leq\delta.
\end{align}
For the structure of \eqref{eq2}, let $t=|\theta-\theta^*|$, and then the quadratic function of $t$
\begin{align}\label{equ8}
g(t):=n\beta^{2}{\rm{E}}c^{2}(X_{1})t^{2}+n\beta {\rm{E}}c(X_{1})t+n\beta^{2}I(\theta^*)+\log(\delta^{-1})
\end{align}
has at least one root. In particular, when $n\geq\frac{4{\rm{E}}c^{2}(X_{1})\log(\delta^{-1})}{[{\rm{E}}c(X_{1})]^{2}-4\beta^{2}{\rm{E}}c^{2}(X_{1})I(\theta^*)}>0$, opting the smaller root
\begin{align}\label{equ4}
\theta_{+}=\theta^{*}+\frac{\beta I(\theta^*)+{\log(\delta^{-1})}/{(n\beta)}}{\frac{1}{2}{\rm{E}}c(X_{1})\left(1+\sqrt{1-\frac{4\beta^{2}
{\rm{E}}c^{2}(X_{1})I(\theta^*)}{[{\rm{E}}c(X_{1})]^{2}}-\frac{4{\rm{E}}c^{2}(X_{1})\log(\delta^{-1})}{n[{\rm{E}}
c(X_{1})]^{2}}}\right)},
\end{align}
we obtain that $\pr ({{\widehat Z}_\beta }(\theta_{+})\leq0)\geq1-\delta$. Since ${{\widehat Z}_\beta }(\theta)$ is strictly decreasing w.r.t. $\theta$, this shows that $\pr (\widehat{\theta}_{\beta}\leq\theta_{+})\geq1-\delta$. Similarly, it is easy to prove that $P(\widehat{\theta}_{\beta}\geq\theta_{-})\geq1-\delta$, where
\begin{align}\label{equ5}
\theta_{-}=\theta^*-\frac{\beta I(\theta^*)+{\log(\delta^{-1})}/{(n\beta)}}{\frac{1}{2}{\rm{E}}c(X_{1})\left(1+\sqrt{1-\frac{4\beta^{2} {\rm{E}}c^{2}(X_{1})I(\theta^*)}{[{\rm{E}}c(X_{1})]^{2}}-\frac{4{\rm{E}}c^{2}(X_{1})
\log(\delta^{-1})}{n[{\rm{E}}c(X_{1})]^{2}}}\right)}.
\end{align}
Obviously we have $\pr (\theta_{-} \leq {\widehat \theta }_{\beta } \leq \theta_{+}) \ge 1 - 2\delta$. Let $h(\beta)=\frac{\beta I(\theta^*)+{\log(\delta^{-1})}/{(n\beta)}}{\frac{1}{2}{\rm{E}}c(X_{1})\left(1+\sqrt{1-\frac{4\beta^{2} {\rm{E}}c^{2}(X_{1})I(\theta^*)}{[{\rm{E}}c(X_{1})]^{2}}-\frac{4{\rm{E}}c^{2}(X_{1})
\log(\delta^{-1})}{n[{\rm{E}}c(X_{1})]^{2}}}\right)}$.
(ii) Next, we consider selecting the optimal parameter $\beta$ to make the length of the confidence interval as short as possible, by calculating the minimum value of $h(\beta)$. It is easy to verify that the second derivative of $h(\beta)$ is non-negative (that is, the minimum value of $h(\beta)$ exists). Then the optimal $\beta$ value $\beta_{opt1}=\sqrt{\frac{\log(\delta^{-1})}{nI(\theta^*)(1+\frac{4{\rm{E}}c^{2}(X_{1})
\log(\delta^{-1})}{n[{\rm{E}}c(X_{1})]^{2}-4{\rm{E}}c^{2}(X_{1})\log(\delta^{-1})})}}$ can be obtained from $\dot{h}(\beta_{opt1})=0$.
Substituting $\beta_{opt1}$ into $h(\beta)$, then we have the minimum value $h(\beta)_{\min}:=2\sqrt{\frac{I(\theta^*)\log(\delta^{-1})}{n[{\rm{E}}c(X_{1})]^{2}-4{\rm{E}}c^{2}(X_{1})
\log(\delta^{-1})}}.$
Combining \eqref{equ4} and \eqref{equ5}, we show that \eqref{eq:T1} with probability at least $1-2\delta$.\\
(iii) If $c(x)=c>0$. By the Lipschitz condition and ${\rm{E}}\dot{l}(X_{1},\theta^*)=0$, we can write ${\rm{E}}\dot{l}^{2}(X_{1},\theta)\leq(\theta-\theta^*)^{2}{\rm{E}}c^{2}(X_{1})
+I(\theta^*)+2c(X_{1})|\theta-\theta^*|{\rm{E}}\dot{l}(X_{1},\theta^*)=(\theta-\theta^*)^{2}c^{2}
+I(\theta^*)$. Similar to the discussion in (i) above, pick $n\geq\frac{2{\rm{E}}c^{2}(X_{1})\log(\delta^{-1})}{[{\rm{E}}c(X_{1})]^{2}-\beta^{2}{\rm{E}}c^{2}(X_{1})I(\theta^*)}
=\frac{2\log(\delta^{-1})}{1-\beta^{2}I(\theta^{*})}$, then \eqref{equ4} and \eqref{equ5} can be rewritten as
\begin{align}\label{equ6}\small
\theta_{+}=\theta^*+\frac{\beta I(\theta^*)/2+{\log(\delta^{-1})}/{(n\beta)}}{\frac{1}{2}c\left(1+\sqrt{1-\beta^{2}I(\theta^*)
-\frac{2\log(\delta^{-1})}{n}}\right)},~\theta_{-}=\theta^*-\frac{\beta I(\theta^*)/2+{\log(\delta^{-1})}/{(n\beta)}}{\frac{1}{2}c\left(1+\sqrt{1-\beta^{2}I(\theta^*)
-\frac{2\log(\delta^{-1})}{n}}\right)},
\end{align}
respectively. Similar to the conclusion of (i), we also have $\pr (\theta_{-} \leq {\widehat \theta }_{\beta } \leq \theta_{+}) \ge 1 - 2\delta$. Taking the optimal tuning parameter $\beta_{opt2}=\sqrt{\frac{2\log(\delta^{-1})}{nI(\theta^*)(1+\frac{2{\rm{E}}c^{2}(X_{1})
\log(\delta^{-1})}{n[{\rm{E}}c(X_{1})]^{2}-2{\rm{E}}c^{2}(X_{1})\log(\delta^{-1})})}}=\sqrt{\frac{2
\log(\delta^{-1})}{nI(\theta^*)\left(1+\frac{2\log(\delta^{-1})}{n-2\log(\delta^{-1})}\right)}}$, and using \eqref{equ6}, we obtain \eqref{eq:T1} with probability at least $1-2\delta$.
\end{proof}
\begin{proof}[Proof of Corollary \ref{cX}]
Let $\{X_{i}\}_{i=1}^{n}$ be a sequence of exponential family i.i.d r.v.s whose density satisfies \eqref{equ10}. We obtain $l(x,\theta)=-\log(p(x,\theta))=-\log h(x)-\theta T(x)-A(\theta)$ and $\dot{l}(x,\theta)=-T(x)+\dot{A}(\theta)$. For \eqref{equ9}, the mean value theorem implies
$$|\dot{l}(x,\theta)-\dot{l}(x,\theta^*)|=|\dot{A}(\theta)-\dot{A}(\theta^*)|
=|\ddot{A}(\breve{\theta})(\theta-\theta^*)|\leq c(x)|\theta-\theta^*|,~\theta^*,\theta \in \Theta$$
where $c(x)=\sup_{\breve{\theta}\in \Theta}|\ddot{A}(\breve{\theta})|$ and the value $\breve{\theta}$ is between $\theta$ and $\theta^*$. Clearly, the $c(x)$ we find is a constant independent of $x$, which completes the proof of Corollary \ref{cX}.
\end{proof}
\section*{Acknowledgements}
Xiaowei Yang is supported in part by the Key Project of Natural Science Foundation of Anhui Province Colleges and Universities (KJ2021A1034) and Key Scientific Research Project of Chaohu University (XLZ-202105). The author would like to thank Dr. Huiming Zhang for the discussions of the ideas behind this article.



%
%



%

\bibliographystyle{apalike}
\bibliography{ref}

\end{document}